\newcommand{\ubar}[1]{\underaccent{\bar}{#1}}
\newtheorem{theorem}{Theorem}[section]
\newtheorem{lemma}{Lemma}[section]
\theoremstyle{remark}
\newtheorem{remark}[theorem]{Remark}
\newcommand {\D} {\displaystyle}
\newcommand{\half}{\frac{1}{2}}
\newcommand {\imbed} {\hookrightarrow}
\newcommand {\scal}[2]{\left(#1,#2\right)}
\newcommand{\abs}[1]{\left\lvert #1 \right\rvert}
\newcommand{\vnorma}[1]{\left\|#1\right\|}
\DeclareMathOperator{\di}{d\hspace{-1.5pt}}
\newcommand{\dx}{ \di x}
\newcommand{\dX}{ \di \X}
\newcommand {\dt}{ \di t}
\newcommand{\ds}{\di s}
\newcommand {\NN } {{\mathbb N}}
\newcommand {\RR } {{\mathbb R}}
\newcommand{\veps}{\varepsilon}
\newcommand {\pdt}{\partial_t}
\newcommand {\pdtt}{\partial_{tt}}
\newcommand {\I}{[0,T]}
\newcommand {\Iopen}{(0,T)}
\newcommand {\domain}{\Omega}
\newcommand {\lp}[1]{\Leb^{#1} (\domain )}
\DeclareMathOperator{\Leb}{L}
\DeclareMathOperator{\Cont}{C}
\DeclareMathOperator{\Hi}{H}
\newcommand {\hk}[1]{\Hi^{#1}(\domain )}
\newcommand {\hko}[1]{\Hi^{#1}_0(\domain )}
\newcommand {\lpkIX}[2]{\Leb^{#1}\left(\Iopen,#2\right)}
\newcommand {\cIX}[1]{\Cont\left(\I,#1\right)}
\newcommand {\ckIX}[2]{\Cont^{#1}\left(\I,#2\right)}
\def \vector#1{\mathbf{#1}}
\newcommand {\X}{\vector{x}}
\begin{document}

	\title[...]{Rothe's method in direct and time-dependent inverse source problems for a semilinear pseudo-parabolic equation}
	

	\author[K.~Van~Bockstal]{Karel Van Bockstal$^1$} 
	\thanks{This work was supported by the grant no. AP23486218
 of the Ministry of  Science and High Education of  Republic
of Kazakhstan (MES RK), the Methusalem programme of Ghent University Special Research Fund (BOF) (Grant Number 01M01021) and the FWO Senior Research Grant G083525N}

	\address[1]{Ghent Analysis \& PDE center, Department of Mathematics: Analysis, Logic and Discrete Mathematics\\ Ghent University\\
		Krijgslaan 281\\ B 9000 Ghent\\ Belgium} 
	\email{karel.vanbockstal@UGent.be}

 \author[K.~Khompysh]{Khonatbek Khompysh$^{1,2,3}$} 

 \address[2]{Al-Farabi Kazakh National University\\ Almaty\\ Kazakhstan}
 \email{konat\_k@mail.ru}

  \author[A.~Altybay]{Arshyn Altybay$^{1,2,3}$} 
\address[3]{Institute of Mathematics and Mathematical Modeling,
\\ 125 Pushkin str., 050010 Almaty, Kazakhstan}
  \email{arshyn.altybay@gmail.com; arshyn.altybay@ugent.be}
	\subjclass[2020]{35A01, 35A02, 35A15, 35R11, 65M12, 33E12}
	\keywords{keywords}
	
	\begin{abstract} 
    In this paper, we investigate the inverse problem of determining an unknown time-dependent source term in a semilinear pseudo-parabolic equation with variable coefficients and a Dirichlet boundary condition. The unknown source term is recovered from additional measurement data expressed as a weighted spatial average of the solution. By employing Rothe's time-discretisation method, we prove the existence and uniqueness of a weak solution under a smallness condition on the problem data. We also provide a numerical scheme based on a perturbation approach, which reduces the solution of the resulting discrete problem to solving two standard variational problems and evaluating a scalar coefficient, and we demonstrate its accuracy and stability through numerical experiments.
	\end{abstract}
	
	\maketitle
	
	\tableofcontents

\section{Introduction}
\label{sec:introduction}

We consider an open and bounded Lipschitz domain $\Omega \subset \mathbb{R}^d$ with boundary $\Gamma$,  $d \in \NN$. 
Let $Q_T = (0,T) \times \Omega$ and $\Sigma_T = (0,T) \times \Gamma$, where $T>0$ is a given final time.
In the sequel, we consider the following  semilinear pseudo-parabolic equation with variable coefficients:
\begin{equation} \label{eq:problem}
	\left\{
	\begin{array}{rl}
		\pdt \left(\rho(t,\X) u(t,\X)\right) -  \nabla \cdot \left( \eta(t,\X) \nabla \pdt u(t,\X)\right)  & \\ [4pt]  - \nabla \cdot \left( \kappa(t,\X) \nabla u (t,\X)\right) 
		= f(u(t,\X))+\D F(t,\X),  & (t,\X) \in Q_T, \\[4pt]
		u=0, & (t,\X) \in \Sigma_T, \\[4pt]
		u(0,\X) = \tilde{u}_0  &\X \in \Omega,
	\end{array}
	\right.
\end{equation}
where  the source $F$ is decomposed as 
\[F(t,\X) = p(t,\X)h(t). \]

In this paper, the time-dependent part $h(t)$ of the source is unknown and will be recovered from the integral measurement
\begin{equation} \label{eq:add:cond}
	\int_\Omega  u(t,\X) \omega(\X)\dX = m(t). 
\end{equation}
In the system (\ref{eq:problem}-\ref{eq:add:cond}), the coefficients $\rho, \eta, \kappa $ and  $\tilde{u}_0, p, \omega, m, f, F$ are given functions, whilst $u$ and $h$ are unknown and need to be determined.

Equations like \eqref{eq:problem} involving third-order mixed derivatives $u_{xxt}$ are usually called pseudo-parabolic equations; in some works, they are also referred to as Sobolev-type equations. Pseudo-parabolic equations can be used to describe many important physical processes, such as the unidirectional propagation of nonlinear long waves \cite{Ting:1963,BBM:1972}, the aggregation of populations \cite{Pad:2004}, fluid flow in fissured rock \cite{BZK:1960}, filtration in porous media \cite{6BER:1989}, 
the nonsteady flow of second-order fluids \cite{Hui:1968}, and the motion of non-Newtonian fluids \cite{AKS2011,zvy-2010}, among others.
Numerous works on linear and nonlinear pseudo-parabolic equations have been devoted to studying direct problems. 

The integral condition \eqref{eq:add:cond} models a weighted spatial measurement of the state variable $u$. 
We assume that the observation device produces an output obtained by integrating local measurements of $u(t,\cdot)$ over the domain $\Omega$ with a prescribed weight function $\omega$ such that $\omega|_{\Gamma}=0$. It reflects that the sensor is insensitive near the boundary, which is consistent with the homogeneous Dirichlet boundary condition and excludes boundary effects from the measurement. 
Physically, such a measurement may arise from a distributed sensing mechanism or an interior averaging device that records a bulk signal proportional to the weighted total of temperature, concentration, or energy density inside the domain. 
In this framework, the function $m(t)$ represents the recorded measurement signal at time $t$, and the condition \eqref{eq:add:cond} provides the additional information required to identify the unknown time-dependent source term $h(t)$.

The study of inverse problems for pseudo-parabolic equations began with the early result of Rundell and Colton \cite{R:1980} in 1980, where an inverse problem for identifying source terms in a linear pseudo-parabolic equation was studied from overspecified boundary data.
The study of inverse problems for partial differential equations, as it is well known, significantly depends on the form of the measurement data. 
In particular, for inverse problems involving parabolic equations, various types of measurements, such as terminal values, boundary Cauchy data, and nonlocal measurements, have been utilised to recover a missing parameter. 
For instance, in the inverse problems of recovering a time-dependent unknown parameter,  the measurements in the form of an integral over the space domain have attracted considerable interest. This is due to the physical motivation that measurement data in integral form can reduce random noise from the measurement process through the averaging properties of integration. However, apart from the works \cite{LyTa:2011,LyVe:2019}, studies on inverse problems for pseudo-parabolic equations with nonlocal measurement \eqref{eq:add:cond} remain relatively rare.
Nevertheless, there are many works on the study of inverse problems for pseudo-parabolic equations; we may refer to \cite{AsAt:1997,FeUr:2004,LyTa:2011,LyVe:2019,AntAA:2020,KhSh:2023,Khompysh2025}, and references therein. In particular, in \cite{AntAA:2020,KhSh:2023}, inverse problems for nonlinear pseudo-parabolic equations perturbed by $p$-Laplacian and nonlinear damping term have been investigated under the measurement in the specific integral form, i.e., for the specially chosen test function in the form $\omega := \omega - \Delta \omega$ in  \eqref{eq:add:cond}. The same measurement has been considered in \cite{Khompysh2025} for the pseudoparabolic equation with a memory term. However, this form of measurement is difficult to justify its physical meaning, though it is more suitable for mathematical study. 
Recently, Van Bockstal and Khompysh \cite{VanBockstal2025} considered the inverse problem (\ref{eq:problem}-\ref{eq:add:cond}) with the Neumann boundary condition and $\omega=1$. Taking advantage of the Neumann boundary condition and the integral measurement over the complete domain $\Omega$, the existence and
uniqueness of a weak solution has been established. 

The main purpose of this paper is to study the inverse source problem for equation \eqref{eq:problem} with the measurement given in the form of \eqref{sec:direct_problem}, which is important from a physical perspective, and to establish the existence and uniqueness of a weak solution applying Rothe’s method \cite{Kacur1985}.  
The additional term 
\[
\nabla \cdot \left( \eta(t,\X) \nabla \partial_t u \right)
\] 
in the pseudo-parabolic equation in \eqref{eq:problem} provides extra regularity for the solution of the direct problem. However, this term introduces a technical challenge in the analysis of the inverse problem (\ref{eq:problem}-\ref{sec:direct_problem}) under Dirichlet boundary conditions. In the case of Neumann boundary conditions \cite{VanBockstal2025}, if the measurement is taken as a spatial average over the whole domain $\Omega$, one can apply the divergence theorem to rewrite the pseudo-parabolic term in terms of the flux of $\partial_t u$ on the boundary. This allows the discrete problem at each time step to be decoupled using the solution at the previous timestep $u_{i-1}$ (calculating subsequently $h_i$ and $u_i$), as employed in previous investigations \cite{Slodicka2014jcam,Grimmonprez2014b,VanBockstal2017,VanBockstal2020,VanBockstal2022a,VanBockstal2022c}.  

Under Dirichlet boundary conditions, the boundary values of $\partial_t u$ are not directly available, and the measurement does not provide information on the full boundary flux. 
Moreover, the decoupling strategy from earlier works is not effective as the previous timestep's velocity $\delta u_{i-1}$ is not given at $i=1.$ In this contribution, we overcome this barrier by introducing a perturbation approach at the discrete level, in which $\delta u_i$ is used instead of $\delta u_{i-1}$, and $u_i$ is computed first, followed by $h_i$. This strategy leads to the existence and uniqueness of a solution to (\ref{eq:problem}-\ref{sec:direct_problem}) under an additional source identification condition \eqref{cond:uniqueness}.

The paper is organised as follows. In \Cref{sec:direct_problem}, we study the corresponding direct problem in detail, which forms the basis for the analysis of the inverse problem. The inverse problem is treated in \Cref{sec:inverse_problem}, where we provide its weak formulation and prove the existence and uniqueness of weak solutions to \eqref{eq:problem} using Rothe's method. In \Cref{sec:experiments}, we present a practical numerical scheme based on a perturbation approach, which reduces the solution of the discrete problem to solving two standard variational problems and evaluating a scalar coefficient. We also establish discrete-time error estimates for this scheme, and numerical experiments are provided to illustrate its accuracy, stability, and convergence, even in the presence of small levels of noise.

\section{Direct problem}
\label{sec:direct_problem}

In this section, we consider the problem \eqref{eq:problem} with $F$ given and discuss its well-posedness. We will employ Rothe's method to obtain the existence of a weak solution to the problem. The inverse problem studied in \Cref{sec:inverse_problem} of the paper will be transformed into a direct problem. For this reason, we first study the direct problem in detail. Afterwards, we explain the differences in analysing the inverse problem (\ref{eq:problem}-\ref{sec:direct_problem}) in  \Cref{sec:inverse_problem}.

We first summarise the assumptions on the data that we will use to show the well-posedness of problem \eqref{eq:problem}:
\begin{enumerate}[\textbf{AS DP}-(1),leftmargin=2.4cm] 
\item  \label{as:DP:rho} $\rho\in \Cont^1(\I,\lp{\infty})$ with
\[
\begin{cases}
	0<\ubar{\rho}_0\leq \rho(t,\X)\leq \ubar{\rho}_1<\infty,\\
	\abs{ \pdt \rho (t,\X)} \leq \ubar{\rho}^\prime_1<\infty. 
\end{cases}
\]
\item\label{as:DP:eta} $\eta \in \Cont(\I,\lp{\infty})$ with $  0 < \ubar{\eta}_0 \leq \eta(t,\X)\leq \ubar{\eta}_1<\infty.$  
\item \label{as:DP:kappa}  $\kappa \in \Cont^1(\I,\lp{\infty})$ with
\[
\begin{cases}
	0<\ubar{\kappa}_0\leq \kappa(t,\X)\leq \ubar{\kappa}_1<\infty,\\
	\abs{\pdt \kappa(t,\X)}\leq \ubar{\kappa}^\prime_1<\infty.
\end{cases}
\]
\item \label{as:DP:f} $f:\mathbb{R} \rightarrow \mathbb{R}$ is  Lipschitz continuous, i.e. 
\[
\hspace*{2.4cm} \abs{f(s_1)-f(s_2)}\le L_f \abs{s_1-s_2}, \quad \forall s_1,s_2\in \RR. 
\]
\item \label{as:DP:p}  $F\in \Leb^2\left( (0,T] , \lp{2} \right).$
\item \label{as:DP:u0} $\tilde{u}_0\in\hko{1}.$
\end{enumerate}

\begin{remark}
We denote the $\lp{2}$ inner product by $\scal{\cdot}{\cdot}$ and its associated norm by $\vnorma{\cdot} = \sqrt{\scal{\cdot}{\cdot}}$. 
\end{remark}

\begin{remark}
From \ref{as:DP:f} it follows that 
\[
\abs{f(s)} \le \abs{f(0)} + L_f \abs{s}, \quad \forall s\in \RR.
\]
Hence, we have for $u:\Iopen \to \lp{2}$ that 
\begin{equation}\label{eq:inequality_nonlinear_f}
	\vnorma{f(u(t))}^2 \le \ubar{L}_f \left(\vnorma{u(t)}^2 + 1\right),  \qquad t\in \Iopen,
\end{equation}
where $\ubar{L}_f:= 2 \max \left\{L_f^2, f(0)^2 \abs{\Omega} \right\}.$
\end{remark}

\begin{remark}
We will frequently use the Friedrichs inequality throughout our analysis. It states that there exists a constant $C_{\textup F} > 0$, depending only on the domain $\Omega$, such that
\begin{equation}\label{eq:friedirchs}
	\vnorma{u} \le \vnorma{u}_{\hk{1}} \le C_{\textup F} \, \vnorma{\nabla u}, 
	\qquad \forall u \in \hko{1}.
\end{equation}
The constant $C_{\textup F}$ is known as the Friedrichs (or Poincaré) constant.
\end{remark}

For convenience, the main symbols, bounds, and constants used throughout the paper related to the direct problem are summarised in \Cref{tab:direct_problem}. 

\begin{table}[H]
\centering
\caption{Main coefficients and data for the direct problem.}
\label{tab:direct_problem}
\begin{tabular}{ll}
	\hline
	\textbf{Symbol} & \textbf{Meaning / Bounds} \\
	\hline
	$\rho(t,\X)$ & Density coefficient, $\rho \in \Cont^1(\I,\lp{\infty})$, $0<\ubar{\rho}_0 \le \rho \le \ubar{\rho}_1$, $|\pdt \rho| \le \ubar{\rho}_1'$ \\
	$\eta(t,\X)$ & Pseudo-parabolic coefficient, $\eta \in \Cont(\I,\lp{\infty})$, $0<\ubar{\eta}_0 \le \eta \le \ubar{\eta}_1$ \\
	$\kappa(t,\X)$ & Diffusion coefficient, $\kappa \in \Cont^1(\I,\lp{\infty})$, $0<\ubar{\kappa}_0 \le \kappa \le \ubar{\kappa}_1$, $|\pdt \kappa| \le \ubar{\kappa}_1'$ \\
	$f:\RR \to \RR$ & Nonlinearity, Lipschitz continuous with constant $L_f$, $\ubar{L}_f:= 2 \max \left\{L_f^2, f(0)^2 \abs{\Omega} \right\}$ \\
	$\tilde{u}_0(\X)$ & Initial condition, $\tilde{u}_0 \in \hko{1}$ \\
	$F(t,\X)$ & Forcing term, $F \in \Leb^2\left( (0,T] , \lp{2} \right)$ \\
	$C_{\textup F}$ & Friedrichs' constant\\
	$U_n$, $\overline{U}_n$ & Rothe functions\\
	\hline
\end{tabular}
\end{table}

The variational formulation of the direct problem reads as: 
\medskip
\begin{center}
Find $u(t)\in \hko{1}$ with $\pdt u(t)\in \hko{1}$ such that for a.a. $t \in \Iopen$ and any $\varphi \in \hko{1}$ it holds that 
\begin{equation}
	\scal{\pdt (\rho(t)u(t))}{\varphi} + \scal{\eta(t) \nabla \pdt u(t)}{\nabla \varphi} +  \scal{\kappa(t) \nabla u(t)}{\nabla \varphi} 
	=\scal{f(u(t))}{\varphi}+\scal{F(t)}{\varphi} \label{eq:var_for_direct_problem}
\end{equation}
\end{center}
and $u(0,\X) = \tilde{u}_0$ a.e. in $\Omega.$ 
\medskip

We first discuss the uniqueness of a solution to \eqref{eq:var_for_direct_problem}. 

\begin{theorem}\label{thm:uniqueness_direct_problem}
There exists at most one solution $u$ to \eqref{eq:var_for_direct_problem} satisfying 
\begin{equation*}
	u \in \cIX{\hko{1}} \quad \text{and} \quad  \pdt u \in \lpkIX{2}{\hko{1}}.
\end{equation*}
\end{theorem}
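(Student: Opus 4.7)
The plan is the standard energy/Gronwall argument: suppose $u_1, u_2$ are two solutions with the stated regularity and set $w := u_1 - u_2$. By subtracting the two variational identities \eqref{eq:var_for_direct_problem} and using the zero initial datum $w(0)=0$, one has for a.a. $t\in\Iopen$ and every $\varphi\in\hko{1}$
\[
\scal{\pdt (\rho(t) w(t))}{\varphi} + \scal{\eta(t) \nabla \pdt w(t)}{\nabla \varphi} + \scal{\kappa(t) \nabla w(t)}{\nabla \varphi} = \scal{f(u_1(t))-f(u_2(t))}{\varphi}.
\]
The key choice of test function is $\varphi = \pdt w(t)$, which is admissible since $\pdt w \in \lpkIX{2}{\hko{1}}$. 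Testing with $w$ itself would be problematic because it would force a time derivative to fall on $\eta$, whose temporal regularity is only $\Cont(\I,\lp{\infty})$ under \ref{as:DP:eta}.

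After expanding $\pdt(\rho w)=\rho \pdt w + (\pdt\rho)w$ and rewriting $\scal{\kappa \nabla w}{\nabla\pdt w} = \tfrac{1}{2}\tfrac{d}{dt}\scal{\kappa \nabla w}{\nabla w} - \tfrac{1}{2}\scal{(\pdt\kappa)\nabla w}{\nabla w}$, the identity becomes
\[
\scal{\rho \pdt w}{\pdt w} + \scal{\eta \nabla \pdt w}{\nabla \pdt w} + \tfrac{1}{2}\tfrac{d}{dt}\scal{\kappa \nabla w}{\nabla w} = \scal{f(u_1)-f(u_2)}{\pdt w} - \scal{(\pdt\rho)w}{\pdt w} + \tfrac{1}{2}\scal{(\pdt\kappa)\nabla w}{\nabla w}.
\]
Integrating from $0$ to $t$ and using the lower bounds in \ref{as:DP:rho}, \ref{as:DP:eta}, \ref{as:DP:kappa}, the left-hand side is bounded below by
\[
\ubar{\rho}_0 \int_0^t \vnorma{\pdt w}^2 \ds + \ubar{\eta}_0 \int_0^t \vnorma{\nabla \pdt w}^2 \ds + \tfrac{\ubar{\kappa}_0}{2} \vnorma{\nabla w(t)}^2.
\]

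On the right-hand side the Lipschitz assumption \ref{as:DP:f} gives $\vnorma{f(u_1)-f(u_2)} \le L_f \vnorma{w}$, and the upper bounds on $\pdt\rho$, $\pdt\kappa$ from \ref{as:DP:rho}, \ref{as:DP:kappa} dominate the remaining terms. Young's inequality with a suitably small parameter $\veps>0$ absorbs all $\vnorma{\pdt w}^2$ contributions into $\ubar{\rho}_0 \int_0^t \vnorma{\pdt w}^2 \ds$, leaving
\[
\vnorma{\nabla w(t)}^2 \le C \int_0^t \left( \vnorma{w(s)}^2 + \vnorma{\nabla w(s)}^2 \right) \ds,
\]
and Poincar\'e's inequality (valid in $\hko{1}$) reduces the right-hand side to a multiple of $\int_0^t \vnorma{\nabla w(s)}^2 \ds$. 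Gronwall's lemma then forces $\vnorma{\nabla w(t)} = 0$ for every $t\in\I$, whence $u_1 = u_2$.

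The calculation itself is routine; the only genuine subtlety is the choice of test function. Testing with $w$ would require $\pdt\eta$ to control the time-derivative term $\scal{\eta \nabla \pdt w}{\nabla w}$, which is not available under \ref{as:DP:eta}. Testing with $\pdt w$ circumvents this issue, turning the $\eta$-term into a manifestly non-negative dissipation $\vnorma{\sqrt{\eta}\,\nabla \pdt w}^2$ and placing the time derivative only on the $\kappa$-weighted $\hko{1}$ seminorm, for which \ref{as:DP:kappa} provides the required $\Cont^1$ regularity.
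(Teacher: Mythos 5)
Your proposal is correct and follows essentially the same route as the paper: subtract the two variational identities, test with $\pdt w$, integrate in time, transfer the time derivative off the $\kappa$-term by parts, estimate the lower-order terms via Lipschitz continuity, Friedrichs and $\veps$-Young, and conclude with Gr\"onwall. Your added observation about why testing with $w$ would fail under the mere $\Cont(\I,\lp{\infty})$ regularity of $\eta$ is a sensible justification of the test-function choice, but the argument itself coincides with the paper's proof.
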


\begin{proof}
Let $u_1$ and $u_2$ be two distinct solutions to the direct problem \eqref{eq:var_for_direct_problem} with the same data. Then it follows from \eqref{eq:var_for_direct_problem} that for $u=u_1-u_2$ it holds that
\begin{equation}
	\scal{\pdt (\rho(t)u(t))}{\varphi} + \scal{\eta(t) \nabla \pdt u(t)}{\nabla \varphi} +  \scal{\kappa(t) \nabla u(t)}{\nabla \varphi} 
	= \scal{f(u_1(t))-f(u_2(t))}{ \varphi}. \label{uni:direct_problem:var_for}
\end{equation}
Setting $\varphi=\pdt u(t)$ in \eqref{uni:direct_problem:var_for} gives 
\begin{multline} \label{uni:direct_problem:eq1}
	\D\int_\Omega  (\pdt \rho) u (\pdt u) \dX + 
	\D\int_\Omega  \rho | \pdt u|^2\dX +\D\int_\Omega  \eta \abs{\nabla \pdt u}^2 \dX  \\  + \frac{1}{2}\D\int_\Omega  \kappa \pdt \abs{\nabla u}^2 \dX 
	=\D\int_\Omega(f(u_1)-f(u_2))\pdt u\dX .
\end{multline}
Next, integrating \eqref{uni:direct_problem:eq1} over $t\in(0,s)\subset (0,T)$ and using 
\begin{equation*}
	\int_0^s   \int_\Omega  \kappa\pdt \abs{\nabla u}^2 \dX \dt = \int_\Omega  \kappa(s) \abs{\nabla u(s)}^2 \dX -   \int_0^s   \int_\Omega (\pdt \kappa) \abs{\nabla u}^2 \dX \dt, 
\end{equation*}
which follows by integration by parts and $u(0,\cdot) =0,$ and employing the assumptions \ref{as:DP:rho}, \ref{as:DP:eta} and \ref{as:DP:kappa}, we have 
\begin{multline}\label{uni:dir_prob:eq1}
	\ubar{\rho}_0\int_0^s \vnorma{\pdt u (t)}^2 \dt + \ubar{\eta}_0 \int_0^s \vnorma{\nabla \pdt u(t)}^2 \dt  + \frac{\ubar{\kappa}_0}{2} \vnorma{\nabla u(s)}^2 \\
	\leq  \frac{\ubar{\kappa}^\prime_1}{2} \int_0^s \vnorma{\nabla u(t)}^2 \dt+\int_0^s\D\int_\Omega(f(u_1)-f(u_2))\pdt u\dX \dt-\int_0^s\D\int_\Omega  (\pdt \rho) u (\pdt u) \dX\dt.
\end{multline}

Moreover, using the H\"{o}lder and  $\veps$-Young inequalities, the Friedrichs inequality 
\eqref{eq:friedirchs},
and the Lipschitz continuity of $f$, we deduce that 
\begin{equation*}
	\abs{-\int_0^s \int_\Omega  (\pdt \rho) u (\pdt u) \dX \dt}  \leq \frac{\ubar{\rho}_1^{\prime 2} C_{\text F}^2}{4\veps_0} \int_0^s \vnorma{\nabla u(t)}^2 \dt + \veps_0 \int_0^s \vnorma{\pdt u(t)}^2 \dt, 
\end{equation*}
\begin{equation*}
	\abs{\int_0^s \int_\Omega  (f(u_1)-f(u_2))\pdt u \dX \dt}  \leq \frac{L_f^2 C_{\text F}^2}{4\veps_1} \int_0^s \vnorma{\nabla u(t)}^2 \dt + \veps_1 \int_0^s \vnorma{\pdt u(t)}^2 \dt. 
\end{equation*}
Hence, setting $\veps_0=\veps_1=\frac{\ubar{\rho}_0}{4}$ and plugging into \eqref{uni:dir_prob:eq1}, we obtain  
\begin{multline*}
	\ubar{\rho}_0\int_0^s \vnorma{\pdt u (t)}^2 \dt + 2\ubar{\eta}_0 \int_0^s \vnorma{\nabla \pdt u(t)}^2 \dt  + \ubar{\kappa}_0 \vnorma{\nabla u(s)}^2 \\
	\leq \left( \ubar{\kappa}^\prime_1 + 2 \frac{ C_{\text F}^2}{\ubar{\rho}_0}(\ubar{\rho}_1^{\prime 2}+L_f^2) \right) \int_0^s \vnorma{\nabla u(t)}^2 \dt.
\end{multline*}
Omitting the first two terms on the LHS and applying Gr\"onwall's lemma to the result, we obtain  that 
\begin{equation*}
	\vnorma{\nabla u (s)}^2   = 0 \quad \text{ for all } s\in (0,T),
\end{equation*}
which implies that $u=0$ a.e. in $Q_T.$ 
\end{proof}

The existence of a weak solution to problem \eqref{eq:var_for_direct_problem} will be addressed by employing Rothe's method, see e.g. \cite{Kacur1985}. We start by dividing the time interval $[0, T]$  into $n \in \mathbb{N}$ equidistant subintervals $[t_{i-1},t_i]$ of length $\tau = T/n<1$, $i = 1,\ldots n$. Hence, $t_i = i \tau$ for $i = 0,1, \ldots,n$.  We consider for any function $z$ that
\[
z_i \approx z(t_i) \quad \text{ and } \quad \pdt z(t_i) \approx \delta z_i = \dfrac {z_i-z_{i-1}}{\tau},
\]
i.e. the backward Euler method is used to approximate the time derivatives at every time step $t_i$. Moreover, linearising the right-hand side of \eqref{eq:var_for_direct_problem} at time step $t_i$ by replacing $u_i$ with $u_{i-1}$, we get the following semi-implicit time-discrete problem at time $t=t_i$: 
\medskip
\begin{center}
Find $u_i \in  \hko{1}$ such that 
\begin{equation}
	\scal{\delta (\rho_i u_i)}{\varphi} + \scal{\eta_i \nabla \delta u_i}{\nabla \varphi}  +  \scal{\kappa_i \nabla u_i}{\nabla \varphi} 
	=\scal{f(u_{i-1})}{\varphi}+\scal{F_i}{\varphi}, \quad \forall\varphi \in \hko{1}, \label{eq:var_for_direct_problem:discrete}
\end{equation}
\end{center}
where 
\medskip
\[ u_{0}=\tilde{u}_0.\]
We have that \eqref{eq:var_for_direct_problem:discrete} is equivalent with solving 
\begin{equation}\label{eq:var_for_direct_problem:discrete2}
a_i(u_i,\varphi) = l_i(\varphi), \quad \forall \varphi \in \hko{1}, 
\end{equation}
where $a_i: \hko{1}\times \hko{1}\to \RR$ is given by 
\begin{equation} \label{eq:bil_form_a}
a_i(u,\varphi) := \frac{1}{\tau}\scal{\rho_{i}u}{\varphi}  + \frac{1}{\tau} \scal{\eta_i \nabla u}{\nabla \varphi}  +  \scal{\kappa_i \nabla u}{\nabla \varphi},  \quad i=1,\ldots,n,  
\end{equation}
and $l_i: \hko{1}\to \RR$ is given by 
\begin{equation}
l_i(\varphi) := \scal{f(u_{i-1})}{\varphi}+\scal{F_i}{\varphi} + \frac{1}{\tau}\scal{\rho_{i-1}u_{i-1}}{\varphi} + \frac{1}{\tau} \scal{\eta_i \nabla u_{i-1}}{\nabla \varphi}, \quad i=1,\ldots,n. 
\end{equation}

\begin{theorem}\label{thm:dp:discrete}
Let the assumptions \ref{as:DP:rho} until \ref{as:DP:u0} be satisfied. For any $i=1,\ldots,n$, there exists a unique $u_i\in\hko{1}$ solving \eqref{eq:var_for_direct_problem:discrete}. 
\end{theorem}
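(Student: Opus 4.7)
The plan is to apply the Lax--Milgram theorem to the reformulation \eqref{eq:var_for_direct_problem:discrete2} on the Hilbert space $\hko{1}$ equipped with its standard norm $\vnorma{\cdot}_{\hk{1}}$, proceeding by induction on $i$. The base case $i=1$ is covered by \ref{as:DP:u0}, which provides $u_0=\tilde{u}_0\in\hko{1}$, and for the induction step I would assume that $u_{i-1}\in\hko{1}$ has already been constructed at the previous time level. The right-hand side datum $F_i$ is interpreted as the Steklov average $F_i := \tau^{-1}\int_{t_{i-1}}^{t_i} F(s)\,\ds \in \lp{2}$, which is well-defined thanks to \ref{as:DP:p}.

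Next I would check the three hypotheses of Lax--Milgram. Bilinearity of $a_i$ is immediate from its definition; its continuity on $\hko{1}\times\hko{1}$ follows from the Cauchy--Schwarz inequality combined with the uniform $\lp{\infty}$ upper bounds $\ubar{\rho}_1$, $\ubar{\eta}_1$, $\ubar{\kappa}_1$ given in \ref{as:DP:rho}--\ref{as:DP:kappa}. For coercivity, the strict positivity asserted in the same assumptions yields
\[
a_i(u,u) \ge \frac{\ubar{\rho}_0}{\tau}\vnorma{u}^2 + \left(\frac{\ubar{\eta}_0}{\tau}+\ubar{\kappa}_0\right)\vnorma{\nabla u}^2 \ge \alpha_\tau \vnorma{u}_{\hk{1}}^2,
\]
for some $\alpha_\tau > 0$ depending on the data and on $\tau$. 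Linearity of $l_i$ is evident, and its continuity on $\hko{1}$ follows because each of its four contributions induces a bounded functional: $f(u_{i-1})\in\lp{2}$ by the Lipschitz assumption \ref{as:DP:f} and inequality \eqref{eq:inequality_nonlinear_f}, $F_i\in\lp{2}$ as already noted, while $\rho_{i-1}u_{i-1}$ and $\eta_i\nabla u_{i-1}$ are square-integrable by the essential boundedness of the coefficients combined with the induction hypothesis $u_{i-1}\in\hko{1}$.

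The Lax--Milgram lemma then produces a unique $u_i\in\hko{1}$ satisfying \eqref{eq:var_for_direct_problem:discrete2}, which is equivalent to \eqref{eq:var_for_direct_problem:discrete}, and the induction closes. I do not expect a serious obstacle here; the only point worth flagging is that both the continuity and the coercivity constants of $a_i$ depend on $\tau$, which is harmless at this discrete level since $\tau>0$ is fixed. Deriving $\tau$-independent a priori estimates, which will be needed later to pass to the limit $\tau\to 0$ and obtain a weak solution of \eqref{eq:var_for_direct_problem}, is a separate step and will rely on testing with $\delta u_i$, analogously to the uniqueness argument already carried out in \Cref{thm:uniqueness_direct_problem}.
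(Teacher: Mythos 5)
Your proposal is correct and follows essentially the same route as the paper: verify continuity and $\hko{1}$-ellipticity of $a_i$ using the uniform bounds in \ref{as:DP:rho}--\ref{as:DP:kappa}, verify boundedness of $l_i$ using \ref{as:DP:f} and $u_{i-1}\in\hko{1}$, and apply Lax--Milgram recursively starting from $\tilde{u}_0\in\hko{1}$. Your explicit remark that $F_i$ should be read as a Steklov average (since \ref{as:DP:p} only gives $F\in\lpkIX{2}{\lp{2}}$) and that the constants depend harmlessly on the fixed $\tau$ are both sensible clarifications of points the paper leaves implicit.
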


\begin{proof}
The bilinear form $a_i$ is $\hko{1}$-continuous as 
\begin{equation*}
	\abs{a_i(u,\varphi)} \le \left(  \frac{1}{\tau} (\ubar{\rho}_1 + \ubar{\eta}_1) + \ubar{\kappa}_1\right) \vnorma{u}_{\hk{1}} \vnorma{\varphi}_{\hk{1}}, \quad \forall u,\varphi \in \hko{1}. 
\end{equation*}
Moreover, the form $a_i$ is also $\hko{1}$-elliptic since
\begin{align*}
	a_i(u,u) & \ge\frac{\ubar{\rho}_0}{\tau} \vnorma{u}^2  +\frac{\ubar{\eta}_0}{\tau}\vnorma{\nabla u}^2+ \ubar{\kappa}_0 \vnorma{\nabla u}^2 \\
	&\ge \min\left\{\frac{\ubar{\rho}_0}{\tau} , \frac{\ubar{\eta}_0}{\tau} +\ubar{\kappa}_0\right\}  \vnorma{u}_{\hk{1}}^2, \quad \forall u \in \hko{1}.  
\end{align*}
The linear functional $l_i$ is bounded on $\hko{1}$ if $u_{i-1}\in\hk{1}$ and $F_i \in \lp{2}$ as
\begin{equation*}
	\abs{l_i(\varphi)} \le \left( \ubar{L}_f  \left( \vnorma{ u_{i-1}} +1\right)+\vnorma{F_i} + \frac{\ubar{\rho}_1}{\tau} \vnorma{u_{i-1}} + \frac{\ubar{\eta}_1}{\tau} \vnorma{\nabla u_{i-1}} \right) \vnorma{\varphi}_{\hk{1}}, \; \forall \varphi\in \hko{1}. 
\end{equation*}
Hence, starting from $\tilde{u}_0\in \hko{1}$, we recursively obtain (as the conditions of the Lax-Milgram lemma are satisfied) the existence and uniqueness of $u_i \in \hko{1}$ for $i=1,\ldots,n.$ 
\end{proof}

The next step is to derive suitable a priori (stability) estimates for $u_i$ and $\delta u_i.$

\begin{lemma}\label{direct_problem:a_priori_estimate}
Let the assumptions \ref{as:DP:rho} until \ref{as:DP:u0} be satisfied. Then there exist  positive constants $C$ and $\tau_0$ such that 
\[
\sum_{i=1}^j \vnorma{\delta u_i}_{\hk{1}}^2 \tau + \vnorma{u_j}_{\hk{1}}^2
+  \sum_{i=1}^j \vnorma{u_i - u_{i-1}}_{\hk{1}}^2 \le C
\]
for all $j=1,\ldots,n$ and any $\tau < \tau_0.$
\end{lemma}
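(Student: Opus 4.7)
The plan is a standard Rothe-method energy estimate: test the time-discrete problem \eqref{eq:var_for_direct_problem:discrete} with $\varphi = \delta u_i$, multiply by $\tau$, sum from $i=1$ to $j$, absorb small terms into the coercivity reservoir, and close with the discrete Gr\"onwall lemma. The three pieces on the left-hand side of the claim are precisely the three coercive contributions that emerge from the $\rho$-, $\eta$- and $\kappa$-terms of the equation.

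First, using the discrete product rule $\delta(\rho_i u_i) = \rho_i\,\delta u_i + (\delta \rho_i)\, u_{i-1}$, the test with $\varphi = \delta u_i$ produces on the LHS the dissipative terms $\scal{\rho_i \delta u_i}{\delta u_i}$ and $\scal{\eta_i \nabla \delta u_i}{\nabla \delta u_i}$, bounded below by $\ubar{\rho}_0 \vnorma{\delta u_i}^2$ and $\ubar{\eta}_0 \vnorma{\nabla \delta u_i}^2$ respectively, together with the commutator $\scal{(\delta \rho_i) u_{i-1}}{\delta u_i}$ and the elastic term $\scal{\kappa_i \nabla u_i}{\nabla \delta u_i}$. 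For the latter I would use the discrete identity
\[
2\tau \scal{\kappa_i \nabla u_i}{\nabla \delta u_i} = \scal{\kappa_i \nabla u_i}{\nabla u_i} - \scal{\kappa_i \nabla u_{i-1}}{\nabla u_{i-1}} + \scal{\kappa_i \nabla(u_i - u_{i-1})}{\nabla(u_i - u_{i-1})},
\]
and, after summation, an Abel rearrangement via $\kappa_i = \kappa_{i-1} + \tau \delta \kappa_i$ converts the first two terms into the telescope $\scal{\kappa_j \nabla u_j}{\nabla u_j} - \scal{\kappa_0 \nabla \tilde{u}_0}{\nabla \tilde{u}_0}$ minus a Gr\"onwall-type tail $\tau \sum \scal{(\delta \kappa_i) \nabla u_{i-1}}{\nabla u_{i-1}}$ controlled by $\ubar{\kappa}'_1$ via \ref{as:DP:kappa}. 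The third summand in the identity provides exactly the $\sum \vnorma{\nabla(u_i - u_{i-1})}^2$ contribution appearing in the claim.

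The remaining three pairings on the RHS ($f(u_{i-1})$, $F_i$ and the $\delta \rho_i$-commutator) are each handled by Cauchy--Schwarz and $\veps$-Young, using \eqref{eq:inequality_nonlinear_f} for $f$ and the Friedrichs inequality \eqref{eq:friedirchs} to upgrade $\vnorma{u_{i-1}}^2$ to $C_\text{F}^2\vnorma{\nabla u_{i-1}}^2$. Choosing each $\veps$ as a small fraction of $\ubar{\rho}_0$ absorbs all $\veps\tau\vnorma{\delta u_i}^2$ terms into the coercive LHS, and taking $F_i$ as a Steklov-type average of $F$ on $[t_{i-1},t_i]$ gives $\tau \sum \vnorma{F_i}^2 \le \vnorma{F}^2_{\lpkIX{2}{\lp{2}}}$ by Jensen. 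The $\lp{2}$-part of $\vnorma{u_i-u_{i-1}}_{\hk{1}}^2$ comes for free from $\vnorma{\nabla(u_i-u_{i-1})}^2$ through Friedrichs applied to $u_i - u_{i-1}\in\hko{1}$, and the $\lp{2}$-part of $\vnorma{\delta u_i}_{\hk{1}}^2$ similarly from the $\eta$-bound.

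After these absorptions one arrives at an inequality of the form
\[
\sum_{i=1}^j \tau \vnorma{\delta u_i}_{\hk{1}}^2 + \vnorma{\nabla u_j}^2 + \sum_{i=1}^j \vnorma{\nabla(u_i - u_{i-1})}^2 \le C_1 + C_2 \,\tau \sum_{i=1}^{j} \vnorma{\nabla u_{i-1}}^2,
\]
to which the discrete Gr\"onwall lemma applies for $\tau < \tau_0 := (2C_2)^{-1}$, yielding the bound on $\vnorma{\nabla u_j}^2$ and hence, via Friedrichs again, on $\vnorma{u_j}_{\hk{1}}^2$. The main obstacle is the bookkeeping in the $\kappa$-term: the Abel summation has to simultaneously deliver the telescope producing $\vnorma{\nabla u_j}^2$, the extra positive $\sum \vnorma{\nabla(u_i - u_{i-1})}^2$ contribution demanded by the claim, and a Gr\"onwall-amenable remainder involving $\delta\kappa_i$, all while leaving enough of the $\ubar{\rho}_0 \vnorma{\delta u_i}^2$ reservoir free to swallow the $\delta\rho_i$-commutator and the $f(u_{i-1})$ term.
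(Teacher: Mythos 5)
Your proposal is correct and follows essentially the same route as the paper: testing with $\varphi=\delta u_i\tau$, the Abel/summation-by-parts identity for the $\kappa$-term (which simultaneously yields the telescoped $\vnorma{\nabla u_j}^2$, the positive $\sum\vnorma{\nabla(u_i-u_{i-1})}^2$ contribution and the $\delta\kappa_i$ remainder), the discrete product rule for the $\rho$-term, $\veps$-Young plus Friedrichs absorptions, and the discrete Gr\"onwall lemma. The only (immaterial) deviations are that you use the variant $\delta(\rho_iu_i)=\rho_i\delta u_i+(\delta\rho_i)u_{i-1}$ where the paper uses $\rho_{i-1}\delta u_i+(\delta\rho_i)u_i$, and that you make explicit the Steklov-average choice of $F_i$ needed to bound $\tau\sum\vnorma{F_i}^2$ under \ref{as:DP:p}.
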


\begin{proof}
We start by putting $\varphi = \delta u_i \tau$ in \eqref{eq:var_for_direct_problem:discrete} and summing up the result for $i=1,\ldots,j$ with $1\le j\le n$. We obtain that 
\begin{multline}\label{dir_problem:a_priori_estimate:eq1}
	\sum_{i=1}^j \scal{\delta (\rho_i u_i)}{\delta u_i}\tau + \sum_{i=1}^j\scal{\eta_i \nabla \delta u_i}{\nabla \delta  u_i}\tau  +  \sum_{i=1}^j \scal{\kappa_i \nabla u_i}{\nabla  \delta  u_i} \tau \\
	= \sum_{i=1}^j \scal{f(u_{i-1})}{\delta  u_i}\tau+\sum_{i=1}^j \scal{F_i}{\delta  u_i}\tau.
\end{multline}
Note that $\|\kappa_i-\kappa_{i-1}\|_{\lp{\infty}}\le \|\partial_t\kappa\|_{\cIX{\lp{\infty}}} \tau$, so that  $\|\delta\kappa_i\|_{\lp{\infty}}\le \ubar{\kappa}^\prime_1$. Similarly, we have $\|\delta\rho_i\|_{\lp{\infty}}\le \ubar{\rho}^\prime_1$.
We have that
\begin{multline*}
	\sum_{i=1}^j  \scal{\kappa_i \nabla u_i}{\nabla \delta u_i} \tau 
	= \half \scal{\kappa_j \nabla u_j}{\nabla u_j}
	- \half \scal{\kappa_0 \nabla \tilde{u}_0}{\nabla  \tilde{u}_0}\\
	- \half \sum_{i=1}^j \scal{(\delta\kappa_i) \nabla u_{i-1}}{\nabla u_{i-1}} \tau 
	+ \half \sum_{i=1}^j \scal{\kappa_i (\nabla u_i -  \nabla u_{i-1})}{\nabla u_i - \nabla u_{i-1}}.
\end{multline*}
Using \ref{as:DP:kappa}, we get as $\tau <1$ that
\begin{multline*}
	\sum_{i=1}^j  \scal{\kappa_i \nabla u_i}{\nabla \delta u_i} \tau\\ 
	\geq \frac{\ubar{\kappa}_0}{2}  \vnorma{\nabla u_j}^2 - \left( \frac{\ubar{\kappa}_1}{2} +  \frac{\ubar{\kappa}_1^\prime}{2}\right) \vnorma{\nabla \tilde{u}_0}^2
	- \frac{\ubar{\kappa}_1^\prime}{2}  \sum_{i=1}^{j-1} \vnorma{\nabla u_{i}}^2 \tau  
	+ \frac{\ubar{\kappa}_0}{2} \sum_{i=1}^j \vnorma{\nabla u_i - \nabla u_{i-1}}^2.  
\end{multline*}
Employing the rule
\begin{equation}\label{eq:delta_rule}
	\delta (ab)_i = \delta (a_ib_i) = (\delta a_i ) b_i + a_{i-1} (\delta b_i), \quad i \in \NN, 
\end{equation}
the $\veps$-Young inequality and the Friedrichs inequality \eqref{eq:friedirchs}, we have by \ref{as:DP:rho} that 
\begin{align*}
	\sum_{i=1}^j \scal{\delta (\rho_i u_i)}{\delta u_i}\tau 
	& \ge \sum_{i=1}^j \scal{\rho_{i-1} (\delta u_i)}{\delta u_i}\tau - \abs{\sum_{i=1}^j \scal{(\delta \rho_i) u_i}{\delta u_i}\tau } \\
	& \ge \left(\ubar{\rho}_0 - {\veps_1}  \right) \sum_{i=1}^j \vnorma{\delta u_i}^2 \tau - \frac{\ubar{\rho}_1^{\prime 2} C_{\textup{F}}^2}{4\veps_1} \sum_{i=1}^j \vnorma{\nabla u_i}^2 \tau.  
\end{align*}
Moreover, using \ref{as:DP:f} and the Friedrichs inequality \eqref{eq:friedirchs}, we obtain that 
\[
\abs{\sum_{i=1}^j \scal{f(u_{i-1})}{\delta  u_i}\tau} 
\le
\frac{\ubar{L}_f}{4\veps_2} \left(T + C_{\textup{F}}^2 \vnorma{\nabla \tilde u_0}^2 \right) + \frac{ \ubar{L}_fC_{\textup{F}}^2}{4\veps_2} \sum_{i=1}^{j-1}  \vnorma{\nabla {u_{i}}}^2 \tau  
+   \veps_2\sum_{i=1}^j \vnorma{\delta u_i}^2 \tau
\]
and 
\begin{equation*}
	\abs{\sum_{i=1}^j \scal{F_i}{\delta  u_i}\tau}  \le
	\frac{1}{4\veps_3} \sum_{i=1}^j\vnorma{F_i}^2 \tau +   \veps_3\sum_{i=1}^j \vnorma{\delta u_i}^2 \tau. 
\end{equation*}
Hence, choosing $\veps_1=\veps_2 = \veps_3=\frac{\ubar{\rho}_0}{6}$, we obtain from \eqref{dir_problem:a_priori_estimate:eq1} that 
\begin{multline*}
	\ubar{\rho}_0 \sum_{i=1}^j \vnorma{\delta u_i}^2 \tau + 2\ubar{\eta}_0 \sum_{i=1}^j \vnorma{\nabla \delta u_i}^2 \tau 
	+ \ubar{\kappa}_0  \vnorma{\nabla u_j}^2 +  \ubar{\kappa}_0 \sum_{i=1}^j \vnorma{\nabla u_i - \nabla u_{i-1}}^2 \\ 
	\le  \frac{3}{\ubar{\rho}_0} \left(\ubar{L}_f  T + \left( \ubar{\kappa}_1+  \ubar{\kappa}_1^\prime + \ubar{L}_f C_{\textup{F}}^2 \right) \vnorma{\nabla \tilde{u}_0}^2  + \sum_{i=1}^j\vnorma{F_i}^2 \tau \right) \\
	+ \left(\ubar{\kappa}_1^\prime +  3 \left(\ubar{\rho}_1^{\prime 2} +  \ubar{L}_f\right) \frac{C_{\textup{F}}^2}{\ubar{\rho}_0} \right)\sum_{i=1}^j \vnorma{\nabla u_i}^2 \tau. 
\end{multline*}
Then, for $\tau$ sufficiently small, the proof concludes by applying the Gr\"onwall lemma.  
\end{proof}

In the next step, we introduce the so-called Rothe functions: The piecewise linear in-time function
\begin{equation}
\label{Rothestepfun:u}U_n:[0,T]\to\lp{2}:t\mapsto\begin{cases} \tilde{u}_0 & t =0,\\
	u_{i-1} + (t-t_{i-1})\delta u_i &
	t\in (t_{i-1},t_i],\quad 1\leqslant i\leqslant n, \end{cases}\end{equation}
	and piecewise constant function
	\begin{equation}
\label{Rothefun:u}
\overline U_n:[-\tau,T] \to\lp{2}:t\mapsto\begin{cases} \tilde{u}_0 &  t \in [-\tau,0],\\
	u_i &     t\in (t_{i-1},t_i],\quad 1\leqslant i\leqslant n.\end{cases}
	\end{equation}
	Similarly, in connection with the given functions $\rho$, $\pdt \rho,$ $\eta$, $\kappa$ and $F,$ we define the functions $\overline{\rho}_n$, $\overline{\pdt \rho}_n$, $\overline{\eta}_n$, $\overline{\kappa}_n$  and $\overline{F}_n,$ respectively. Using these functions and \eqref{eq:delta_rule}, we rewrite the discrete variational formulation \eqref{eq:var_for_direct_problem:discrete} as follows ($t\in(0,T]$)
	\begin{multline}\label{eq:var_for_direct_problem:whole_time_frame}
\scal{\overline{\rho}_n(t-\tau) \pdt U_n(t) +  \overline{\pdt \rho}_n(t) \overline U_n(t)}{\varphi} + \scal{\overline{\eta}_n(t) \nabla \pdt U_n(t)}{\nabla \varphi} \\ +  \scal{\overline{\kappa}_n(t) \nabla \overline U_n(t)}{\nabla \varphi}
=\scal{f(\overline{U}_n(t-\tau))}{\varphi}+\scal{\overline{F}_n(t)}{\varphi}, \quad \forall\varphi \in \hko{1}.
\end{multline}

We show the existence of a solution in the next theorem. 

\begin{theorem}\label{thm:existence_direct_problem}
Let the assumptions \ref{as:DP:rho} until \ref{as:DP:u0} be satisfied. A unique weak solution $u$ exists to \eqref{eq:var_for_direct_problem} satisfying 
\begin{equation*}
	u \in \cIX{\hko{1}} \quad \text{and} \quad  \pdt u \in \lpkIX{2}{\hko{1}}.
\end{equation*}
\end{theorem}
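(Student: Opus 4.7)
The plan is to prove existence via Rothe's method using the time-discrete approximations constructed in \Cref{thm:dp:discrete}, while uniqueness is already provided by \Cref{thm:uniqueness_direct_problem}. The essential input is the a priori bound from \Cref{direct_problem:a_priori_estimate}, which I would first translate into uniform (in $n$) estimates on the Rothe functions: $\{U_n\}$ and $\{\overline U_n\}$ are bounded in $\Leb^\infty(\I,\hko{1})$, $\{\pdt U_n\}$ is bounded in $\lpkIX{2}{\hko{1}}$, and from the telescoping sum of squared jumps one obtains
\[
\vnorma{\overline U_n - U_n}_{\lpkIX{2}{\hko{1}}}^2 \le \tau \sum_{i=1}^n \vnorma{u_i - u_{i-1}}_{\hk{1}}^2 \le C\tau \to 0.
\]

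Next, I would extract a (non-relabelled) subsequence so that $U_n \wto u$ in $\lpkIX{2}{\hko{1}}$, $\pdt U_n \wto \pdt u$ in $\lpkIX{2}{\hko{1}}$, and hence $\overline U_n \wto u$ in $\lpkIX{2}{\hko{1}}$ as well. The regularity $u \in \cIX{\hko{1}}$ with $\pdt u \in \lpkIX{2}{\hko{1}}$ then follows from the standard embedding $\{v:v,\pdt v \in \lpkIX{2}{\hko{1}}\} \hookrightarrow \cIX{\hko{1}}$. Moreover, an Aubin--Lions type argument (or direct use of the uniform estimate on $\pdt U_n$) yields strong convergence $\overline U_n \to u$ in $\lpkIX{2}{\lp{2}}$, and via the continuity of the data functions one has $\overline\rho_n \to \rho$, $\overline{\pdt \rho}_n \to \pdt\rho$, $\overline\eta_n \to \eta$, $\overline\kappa_n \to \kappa$ in $\cIX{\lp{\infty}}$, and $\overline F_n \to F$ in $\lpkIX{2}{\lp{2}}$.

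To pass to the limit, I would integrate \eqref{eq:var_for_direct_problem:whole_time_frame} over an arbitrary subinterval $(0,s) \subset \Iopen$ against a test function $\varphi \in \hko{1}$. The linear terms involving $\overline\kappa_n \nabla \overline U_n$, $\overline\eta_n \nabla \pdt U_n$, and $\overline{\pdt\rho}_n \overline U_n$ can be handled by combining strong convergence of the coefficients with weak convergence of the relevant gradients/values in $\Leb^2$. For the principal term $\scal{\overline\rho_n(t-\tau)\pdt U_n(t)}{\varphi}$, I would use that $\overline\rho_n(\cdot-\tau)\varphi \to \rho\varphi$ strongly in $\lpkIX{2}{\lp{2}}$ together with $\pdt U_n \wto \pdt u$ weakly. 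The nonlinear term $f(\overline U_n(t-\tau))$ is handled by the Lipschitz assumption \ref{as:DP:f} combined with the strong $\lpkIX{2}{\lp{2}}$-convergence of $\overline U_n$ (noting that a time shift by $\tau$ preserves strong convergence). Finally, the relation $\pdt(\rho u) = (\pdt \rho) u + \rho \pdt u$ (valid in the distributional sense under our regularity) reassembles the two leading terms into $\scal{\pdt(\rho u)}{\varphi}$, recovering \eqref{eq:var_for_direct_problem}. The initial condition $u(0)=\tilde u_0$ follows from $U_n(0)=\tilde u_0$ and $U_n \to u$ in $\cIX{\lp{2}}$.

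The main obstacle I anticipate is handling the coupled treatment of the $\overline\rho_n(t-\tau) \pdt U_n$ term together with $\overline{\pdt\rho}_n \overline U_n$, because these two pieces only combine into the compact $\pdt(\rho u)$ after passing to the limit; keeping the appropriate time shifts consistent (the coefficient on $\pdt U_n$ is evaluated at $t-\tau$, while the coefficient for $\overline{\pdt\rho}_n \overline U_n$ is at $t$) requires some care. Once this matching is done, the weak/strong convergence pairing is routine and gives the variational identity for a.a.\ $t \in \Iopen$, closing the existence argument; uniqueness is already in \Cref{thm:uniqueness_direct_problem}, so the whole subsequence in fact converges.
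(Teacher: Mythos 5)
Your proposal is correct and follows essentially the same route as the paper: uniform bounds from \Cref{direct_problem:a_priori_estimate} on the Rothe functions, weak compactness in $\lpkIX{2}{\hko{1}}$ plus a compactness/Aubin--Lions step for strong $\lpkIX{2}{\lp{2}}$-convergence of $\overline U_n$ (and its $\tau$-shift), limit passage in the time-integrated form of \eqref{eq:var_for_direct_problem:whole_time_frame} by weak--strong pairing, and whole-sequence convergence via \Cref{thm:uniqueness_direct_problem}. The only cosmetic difference is that the paper gets $U_{n_l}\to u$ in $\cIX{\lp{2}}$ directly from the compact embedding $\hko{1}\hookrightarrow\hookrightarrow\lp{2}$ (citing \cite[Lemma~1.3.13]{Kacur1985}), whereas you phrase the same compactness step through the $\Leb^2$-in-time framework; both yield the identical conclusion.
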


\begin{proof}
We have from \Cref{direct_problem:a_priori_estimate} that there exist $C>0$ and $n_0\in \NN$ such that for all $n\geqslant n_0 > 0$ it holds that 
\[
\max_{t\in [0,T]} \vnorma{\overline{U}_n(t)}_{\hk{1}}^2  + 
\int_0^T \vnorma{\pdt {U}_n(t)}^2 \dt+\sum\limits_{i=1}^n\vnorma{\ \int\limits_{t_{i-1}}^{t_i}\pdt \nabla {U}_n(t) \dt}^2\leqslant C.
\]
The compact embedding  $\hko{1} \imbed \imbed \lp{2} $ (see \cite[Theorem~6.6-3]{Ciarlet2013}) or \cite[Lemma~1.3.13]{Kacur1985}) leads to the existence of a function
$u \in \cIX{\lp{2}}  \cap \Leb^{\infty}\left((0,T), \hko{1}\right)$ with $\pdt u \in \lpkIX{2}{\lp{2}}$,
and a subsequence $\{ U_{n_l}\}_{l\in\NN}$ of $\{U_n\}$ such that
\begin{equation} \label{convergence:rothe_functions_dp}
	\left\{
	\begin{array}{ll}
		U_{n_l} \to u & \text{in}~~\Cont\left([0,T], \lp{2}\right), \\[4pt]
		U_{n_l}(t) \rightharpoonup u(t) & \text{in}~~\hko{1},~~\forall t \in [0,T], \\[4pt]
		\overline{U}_{n_l}(t) \rightharpoonup u(t) & \text{in}~~\hko{1},~~\forall t \in [0,T],  \\[4pt]
		\pdt {U}_{n_l} \rightharpoonup \pdt u & \text{in}~~\Leb^{2}\left((0,T), \lp{2}\right).
	\end{array}
	\right.
\end{equation}
Additionally, from \Cref{direct_problem:a_priori_estimate}, we obtain that 
\begin{equation*}
	\int_0^T \vnorma{\pdt {U}_{n_l}(t)}_{\hk{1}}^2 \dt \leqslant C. 
\end{equation*}
By the reflexivity of the space $\lpkIX{2}{\hko{1}}$, we have
that
\begin{equation}\label{weak_convergence_time_der}
	\pdt {U}_{n_l} \rightharpoonup \pdt u \quad  \text{in} \quad \Leb^{2}\left((0,T), \hko{1}\right), 
\end{equation}
i.e. $u\in\cIX{\hko{1}}$.
Moreover, from \Cref{direct_problem:a_priori_estimate}, we also have that (note that $\tau_l = T/n_l$)
\begin{equation}\label{dir:str:conv:L2}
	\int_0^{T} \left( \vnorma{\overline{U}_{n_l}(t) - U_{n_l}(t) }^2 + \vnorma{\overline{U}_{n_l}(t-\tau) - U_{n_l}(t) }^2 \right) \dt  \leqslant 2 \tau_l^2 \sum_{i=1}^{n_l} \vnorma{\delta u_i}^2 \tau  \leqslant C \tau_l^2,
\end{equation}
so \begin{equation}\label{dir:str:conv:L22}\overline U_{N_l},  \overline U_{N_l}(\cdot-\tau) \to u \ \text{in} \ \lpkIX{2}{\lp{2}} \ \text{as} \ l\rightarrow \infty.\end{equation}
Now, we integrate \eqref{eq:var_for_direct_problem:whole_time_frame} for $n=n_l$ over $t\in(0,\eta)\subset \Iopen$ to get that 
\begin{multline}\label{eq:var_for_direct_problem:whole_time_frame_int}
	\int_0^\eta \scal{\overline{\rho}_{n_l}(t-\tau) \pdt U_{n_l}(t) +  \overline{\pdt \rho}_{n_l}(t) \overline U_{n_l}(t)}{\varphi} \dt  + \int_0^\eta \scal{\overline{\eta}_{n_l}(t) \nabla \pdt U_{n_l}(t)}{\nabla \varphi} \dt \\ +  \int_0^\eta \scal{\overline{\kappa}_{n_l}(t) \nabla \overline U_{n_l}(t)}{\nabla \varphi} \dt
	= \int_0^\eta\scal{f(\overline{U}_{n_l}(t-\tau))}{\varphi}\dt+\int_0^\eta \scal{\overline{F}_{n_l}(t)}{\varphi} \dt, \quad \forall\varphi \in \hko{1}. 
\end{multline}
We now pass to the limit \(l\to\infty\) in
\eqref{eq:var_for_direct_problem:whole_time_frame_int}.
The convergence of the linear terms follows from
\eqref{convergence:rothe_functions_dp}--\eqref{weak_convergence_time_der},
together with the strong convergence \eqref{dir:str:conv:L22}, and the boundedness and continuity assumptions
on the coefficients \(\rho\), \(\eta\), and \(\kappa\).
For the nonlinear term, the Lipschitz continuity of \(f\)
(Assumption~\ref{as:DP:f}) combined with the strong convergence
\(\overline U_{n_l}(\cdot-\tau) \to u\) in \(\lpkIX{2}{\lp{2}}\)
yields
\[
f(\overline U_{n_l}(\cdot-\tau)) \to f(u)
\quad \text{ in } \lpkIX{2}{\lp{2}}.
\]
Moreover, since \(\overline F_{n_l} \to F\) in \(\lpkIX{2}{\lp{2}}\),
we may also pass to the limit in the forcing term.
Therefore, passing \(l\to\infty\) in
\eqref{eq:var_for_direct_problem:whole_time_frame_int} gives
\begin{multline*}
	\int_0^\eta \scal{\rho(t) \pdt u(t) +  \pdt \rho(t) u(t)}{\varphi} \dt  + \int_0^\eta \scal{\eta(t) \nabla \pdt u(t)}{\nabla \varphi} \dt \\ +  \int_0^\eta \scal{\kappa(t) \nabla u(t)}{\nabla \varphi} \dt
	= \int_0^\eta\scal{f({u}(t))}{\varphi}\dt+\int_0^\eta \scal{F(t)}{\varphi} \dt, \quad \forall\varphi \in \hko{1}. 
\end{multline*}
Differentiating this result with respect to $\eta$ implies that $u$ solves \eqref{eq:var_for_direct_problem}. The uniqueness of a solution (see \Cref{thm:uniqueness_direct_problem}) gives that the entire Rothe sequence $\{U_n\}$ converges in $\cIX{\lp{2}}$ to the solution. 
\end{proof}

\section{Inverse source problem}
\label{sec:inverse_problem}


In this section, we will show the existence and uniqueness of a weak solution to the inverse source problem (\ref{eq:problem}-\ref{sec:direct_problem}).
Next to assumptions \ref{as:DP:rho} until \ref{as:DP:u0}, we will need the following additional assumptions
\begin{enumerate}[\textbf{AS DP}-(\arabic*),leftmargin=2.4cm]  
\setcounter{enumi}{6}
\item \label{as:rho_extra} $\rho \in \cIX{\hk{1}}.$
\item \label{as:omega}  $ \omega\in \hko{1}$.
\item \label{as:p} $p\in \cIX{\lp{2}}$ and the function $\ubar{\omega}:[0,T]\to \mathbb{R}$ defined  by 
\[\ubar{\omega}(t):=\scal{p(t)}{\frac{\omega}{\rho(t)}}\]
satisfies
\[
\ubar{\omega}(t) \neq 0 \text{ for all } t\in \I \ \ \text{and}  \ \ \ubar{\omega}\in \Cont\left(\I\right).
\]
We denote 
\[
0< \ubar{\omega}_m:=\min_{t\in \I} \abs{\ubar{\omega}(t)}.
\]
\item \label{as:m}  $m\in \Hi^1((0,T])$.
\item \label{as:uniqueness} There exists a positive constant $\zeta<1$ such that  
\begin{equation} \label{cond:uniqueness}
	\vnorma{p}_{\mathcal{X}}^2  \frac{ M^2  \ubar{\eta}_1^2}{4\ubar{\omega}_m^2 \ubar{\eta}_0 \ubar{\rho}_0}  < \zeta, 
\end{equation}
where  $M:=\sup\limits_{t\in[0,T]} \vnorma{\nabla \left(\dfrac{\omega}{\rho(t)}\right)}$ and $\mathcal{X} := \cIX{\lp{2}}$. 
\end{enumerate}

\begin{remark}[About the condition \eqref{cond:uniqueness}]
The uniqueness and existence of a solution will rely on the technical condition \eqref{cond:uniqueness}. This condition can be interpreted in several ways:
\begin{itemize}
	\item As a smallness condition on the spatial source profile $ p $ as it is evidently satisfied if the norm $ \|p\|_{\mathcal{X}} $ is sufficiently small;
	\item As a condition on the problem's parameters: The condition also involves the upper and lower bounds of the coefficient  $ \eta $ and the lower bound of $ \rho $. Specifically, it requires the ratio $ \frac{\ubar{\eta}_1^2}{\ubar{\eta}_0 \ubar{\rho}_0} $ to be sufficiently small.
	\item Role of the measurement weight $ \omega $. The constant $ M $, which depends on the gradient of $ \omega/\rho $, and $ \bar{\omega}_m $ also influence the condition. A well-chosen weight function $ \omega $ that is sufficiently ``smooth'' (small $M$) and provides a strong signal (large $ \bar{\omega}_m $) makes the condition easier to satisfy.
\end{itemize}
The search for a proof technique that does not require this, or that relaxes it to a more natural condition, remains an open and interesting problem for future work.
\end{remark}

\begin{remark}
The identifiability condition 
$(p(t), \tfrac{\omega}{\rho(t)}) \neq 0$
in \ref{as:p} requires that the spatial weighting
function $\omega$ used in the measurement is not orthogonal to the spatial source
profile $p(t)$ (up to the known scaling $\rho(t)$).
Note that overlap of the supports of $p(t)$ and $\omega$ is not sufficient, since sign changes may lead to cancellation in the integral.
Identifiability is therefore guaranteed if $\omega$ is chosen to be strictly positive
on a subset of $\Omega$ where $p(t)$ does not change sign.
From an experimental design perspective, this can be enforced by placing sensors in regions where the source profile has a
dominant sign.
\end{remark}

\begin{remark}[Instability of the inverse source problem]
Inverse source problems for parabolic and pseudo-parabolic equations are well known
to be ill-posed in the sense of Hadamard: small perturbations in the data may lead to
large deviations in the reconstructed source.

This instability can already be illustrated in a simple linear pseudo-parabolic
setting.
Consider the one-dimensional problem
\[
\partial_t u - \partial_{xx} u - \partial_{xx}\partial_t u
= h(t)p(x), \quad x \in (0,\pi),
\]
with homogeneous Dirichlet boundary conditions and zero initial data.
Assume that the measurement is given by
\[
m(t) = \int_0^\pi u(t,x)\omega(x)dx ,
\qquad \omega \in \Hi_0^1(0,\pi).
\]
Expanding the solution in the eigenfunctions $\sin(kx)$ of $-\partial_{xx}$ (with corresponding eigenvalues $\lambda_k = k^2$) yields
\[
u(t,x) = \sum_{k=1}^\infty u_k(t)\sin(kx),
\]
where the coefficients $u_k(t)$ satisfy
\[
(1+k^2)u_k^\prime(t) + k^2 u_k(t) = h(t)p_k, \quad u_k(0) =0,
\]
with $
p_k := \frac{2}{\pi} \int_0^\pi p(x) \sin(kx)\dx.$
Solving this equation gives
\[
u_k(t)
= \frac{p_k}{1+k^2}
\int_0^t e^{-\frac{k^2}{1+k^2}(t-s)} h(s)\ds .
\]
Consequently, the measurement can be written as
\[
m(t)
= \sum_{k=1}^\infty
\frac{p_k\omega_k}{1+k^2}
\int_0^t e^{-\frac{k^2}{1+k^2}(t-s)} h(s)\ds, 
\]
where $
\omega_k := \int_0^\pi \omega(x) \sin(kx)\dx.$ This is a first-kind Volterra integral equation for $h$, which is highly sensitive to perturbations in the data: since the kernel
\(\frac{p_k \, \omega_k}{1 + \eta k^2} e^{-\frac{k^2}{1 + \eta k^2} (t-s)}\)
decays rapidly for large \(k\), even small errors in $m(t)$ can be strongly amplified when reconstructing $h(t)$.
\end{remark}

Next to the main notations for the direct problem in \Cref{tab:direct_problem}, we summarise the one for the inverse problem in \Cref{tab:inverse_problem}.

\begin{table}[htbp]
\centering
\caption{Main quantities and parameters for the inverse problem.}
\label{tab:inverse_problem}
\begin{tabular}{ll}
	\hline
	\textbf{Symbol} & \textbf{Meaning / Bounds} \\
	\hline
	$h(t)$ & Unknown source, $h\in \Leb^2(0,T)$ \\
	$p(t,\X)$ & Given part of the source $ph$, $p \in \cIX{\lp{2}}$ \\
	$\omega(\X)$ & Spatial weighting function, $\omega \in \hko{1}$ \\
	$m(t)$ & Measurement, $m \in \Hi^1((0,T])$ \\
	$\ubar{\omega}(t)$ & Measurement scalar, $\ubar{\omega}(t) = \scal{p(t)}{\frac{\omega}{\rho(t)}} \neq 0$, $\ubar{\omega}_m := \min_{t \in \I} |\ubar{\omega}(t)|>0$ \\
	$M$ & Gradient bound, $M = \sup_{t \in \I} \vnorma{\nabla (\omega/\rho(t))}$ \\
	$\zeta$ & Uniqueness parameter, $0<\zeta<1$, satisfying \eqref{cond:uniqueness} \\
	\hline
\end{tabular}
\end{table}

Now, we will derive an expression for $h$ in terms of $u$ and the given data. In this way, we will be able to reformulate the inverse problem as a coupled direct problem.
We assume that the weight function $\omega$ satisfies \ref{as:omega} such that the derivatives of the solution can be transferred to the weight function $\omega$, making it possible to calculate the necessary estimates when proving the uniqueness and existence of a solution to the inverse problem. For this reason, we multiply \cref{eq:problem} by $\frac{\omega}{\rho}$ and integrate over $\Omega$. Using 
$\pdt (\rho u)= (\pdt \rho) u + \rho \pdt u$ and Green's theorem, as $\omega\in \hko{1}$, we obtain  that 
\begin{multline} \label{eq:expression_h}
h(t)=\frac{1}{\ubar{\omega}(t)}\left[ m^\prime(t) +  \scal{u(t)}{\frac{\omega \pdt \rho(t)}{\rho(t)}} \right. \\ \left.
+\scal{\eta(t) \nabla \pdt u (t) }{\nabla \left(\frac{\omega}{\rho(t)}\right)} + \scal{\kappa(t)\nabla u(t)}{\nabla \left(\frac{\omega}{\rho(t)}\right)}   - \scal{f(u(t))}{\frac{\omega}{\rho(t)}}\right], 
\end{multline}
where we have used \ref{as:p}.
Hence, the following variational formulation can be defined: 
\medskip
\begin{center}
Find $u(t)\in \hko{1}$ with $\pdt u(t)\in \hko{1}$ such that for a.a. $t \in \Iopen$ and any $\varphi \in \hko{1}$ it holds that 
\begin{equation}
	\scal{\pdt (\rho(t)u(t))}{\varphi} + \scal{\eta(t) \nabla \pdt u(t)}{\nabla \varphi} +  \scal{\kappa(t) \nabla u(t)}{\nabla \varphi} 
	=h(t) \scal{p(t)}{\varphi} + \scal{f(u(t))}{\varphi} , \label{eq:var_for}
\end{equation}
with $h(t)$ given by \eqref{eq:expression_h}.
\end{center}
\medskip

Compared to previous inverse source problem connected to parabolic equations wherein the time-dependent part of the source is of interest, see e.g. \cite{Slodicka2014jcam,VanBockstal2022c} the additional $\nabla \cdot \left( \eta(t,\X) \nabla \pdt u\right)$ term in the pseudo-parabolic equation in \eqref{eq:problem} complicates the analysis and leads to the additional restriction \eqref{cond:uniqueness} on the data in our analysis. We start with showing the uniqueness of a solution to the problem (\ref{eq:expression_h}-\ref{eq:var_for}) under this condition.

\begin{theorem}\label{thm:uniq_inv_problem}
Let the assumptions \ref{as:DP:rho} until \ref{as:uniqueness} be fulfilled. 
Then, there exists at most one couple $\{u,h\}$ solving problem (\ref{eq:expression_h}-\ref{eq:var_for})  such that
\begin{equation*}
	h \in \Leb^2\Iopen, \quad  u \in \cIX{\hko{1}} \quad \text{with} \quad  \pdt u \in \lpkIX{2}{\hko{1}}.
\end{equation*}
\end{theorem}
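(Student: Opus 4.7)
The plan is a classical uniqueness argument for the coupled system. Assume two pairs $\{u_1,h_1\}$ and $\{u_2,h_2\}$ share the same data, and set $u:=u_1-u_2$, $h:=h_1-h_2$. Subtracting the two copies of \eqref{eq:var_for} and of \eqref{eq:expression_h} yields, for a.a.\ $t\in\Iopen$ and every $\varphi\in\hko{1}$,
\begin{equation*}
\scal{\pdt(\rho u)}{\varphi}+\scal{\eta\nabla\pdt u}{\nabla\varphi}+\scal{\kappa\nabla u}{\nabla\varphi}=h(t)\scal{p(t)}{\varphi}+\scal{f(u_1)-f(u_2)}{\varphi},
\end{equation*}
coupled with
\begin{equation*}
h(t)=\tfrac{1}{\ubar{\omega}(t)}\left[\scal{u(t)}{\tfrac{\omega\,\pdt\rho(t)}{\rho(t)}}+\scal{\eta(t)\nabla\pdt u(t)}{\nabla\tfrac{\omega}{\rho(t)}}+\scal{\kappa(t)\nabla u(t)}{\nabla\tfrac{\omega}{\rho(t)}}-\scal{f(u_1)-f(u_2)}{\tfrac{\omega}{\rho(t)}}\right].
\end{equation*}
The direct-problem uniqueness \Cref{thm:uniqueness_direct_problem} does not apply here since $h$ is unknown and must be controlled jointly with $u$.

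Next, I will test the variational identity with $\varphi=\pdt u(t)$ and integrate over $(0,s)\subset(0,T)$. The left-hand side is processed exactly as in the proof of \Cref{thm:uniqueness_direct_problem}: splitting $\pdt(\rho u)$ via \eqref{eq:delta_rule}, integrating the $\kappa$-term by parts in time, and using \ref{as:DP:rho}--\ref{as:DP:kappa} together with the Friedrichs inequality \eqref{eq:friedirchs}. This yields the coercive quantities $\ubar{\rho}_0\int_0^s\vnorma{\pdt u}^2\dt$, $\ubar{\eta}_0\int_0^s\vnorma{\nabla\pdt u}^2\dt$ and $\ubar{\kappa}_0\vnorma{\nabla u(s)}^2$, modulo lower-order terms absorbable into $\int_0^s\vnorma{\nabla u}^2\dt$ via $\veps$-Young.

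The crucial new term is $\int_0^s h(t)\scal{p(t)}{\pdt u(t)}\dt$. Expanding $h$, the delicate contribution is
\begin{equation*}
\mathcal{I}:=\int_0^s\tfrac{1}{\ubar{\omega}(t)}\scal{\eta(t)\nabla\pdt u(t)}{\nabla\tfrac{\omega}{\rho(t)}}\scal{p(t)}{\pdt u(t)}\dt,
\end{equation*}
which, by two applications of Cauchy--Schwarz, \ref{as:DP:eta}, and the definitions of $M$ and $\ubar{\omega}_m$, is bounded by $\tfrac{\ubar{\eta}_1 M}{\ubar{\omega}_m}\int_0^s\vnorma{p(t)}\vnorma{\nabla\pdt u}\vnorma{\pdt u}\dt$. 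A $\veps$-Young inequality with $\veps=\ubar{\eta}_0$ followed by $\vnorma{p(t)}\le\vnorma{p}_{\mathcal{X}}$ gives
\begin{equation*}
\abs{\mathcal{I}}\le\ubar{\eta}_0\int_0^s\vnorma{\nabla\pdt u}^2\dt+\tfrac{M^2\ubar{\eta}_1^2\vnorma{p}_{\mathcal{X}}^2}{4\ubar{\omega}_m^2\ubar{\eta}_0}\int_0^s\vnorma{\pdt u}^2\dt.
\end{equation*}
The hypothesis \eqref{cond:uniqueness} forces the second coefficient to be strictly less than $\zeta\ubar{\rho}_0$, so $\abs{\mathcal{I}}$ is absorbed into a strict fraction of the coercive terms on the left.

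All remaining contributions to $h\scal{p}{\pdt u}$ (the $\pdt\rho$, $\kappa\nabla u$ and Lipschitz-$f$ pieces), together with the direct $\scal{f(u_1)-f(u_2)}{\pdt u}$ and $\scal{(\pdt\rho)u}{\pdt u}$ terms, produce only products of the form $\vnorma{u}\vnorma{\pdt u}$ or $\vnorma{\nabla u}\vnorma{\pdt u}$; they are tamed by \ref{as:DP:f}, \eqref{eq:friedirchs} and $\veps$-Young with sufficiently small $\veps$, leaving a $\int_0^s\vnorma{\nabla u}^2\dt$ contribution and an arbitrarily small $\veps'$ spoiling the $\pdt u$-coercivity. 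Collecting everything produces, since $\zeta<1$,
\begin{equation*}
(1-\zeta-\veps')\ubar{\rho}_0\int_0^s\vnorma{\pdt u}^2\dt+\ubar{\eta}_0\int_0^s\vnorma{\nabla\pdt u}^2\dt+\ubar{\kappa}_0\vnorma{\nabla u(s)}^2\le C\int_0^s\vnorma{\nabla u(t)}^2\dt,
\end{equation*}
and Gr\"onwall applied to $s\mapsto\vnorma{\nabla u(s)}^2$ forces $u\equiv 0$ on $Q_T$; the formula \eqref{eq:expression_h} then gives $h\equiv 0$. The \emph{main obstacle} is precisely the term $\mathcal{I}$: because $\pdt u$ appears both as the test function and inside $h$ (through the pseudo-parabolic contribution $\eta\nabla\pdt u$), it is genuinely quadratic in $\pdt u$ and cannot be closed by Gr\"onwall alone. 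The condition \eqref{cond:uniqueness} is tailored exactly so that, after the Young splitting with weight $\ubar{\eta}_0$, the coefficient in front of $\int_0^s\vnorma{\pdt u}^2\dt$ remains strictly below $\ubar{\rho}_0$, restoring coercivity.
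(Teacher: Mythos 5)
Your proposal is correct and follows essentially the same route as the paper: test with $\varphi=\pdt u$, bound $\abs{h(t)}$ from \eqref{eq:expression_h} via Cauchy--Schwarz, isolate the quadratic term coupling $\vnorma{\nabla\pdt u}$ and $\vnorma{\pdt u}$, absorb it using \eqref{cond:uniqueness} after a Young splitting (the paper puts the explicit $\veps_3=\ubar{\rho}_0\zeta$ on the $\vnorma{\pdt u}^2$ side rather than $\ubar{\eta}_0$ on the $\vnorma{\nabla\pdt u}^2$ side, but the two splittings are equivalent under the symmetric condition \eqref{cond:uniqueness}), and conclude by Gr\"onwall. The only cosmetic slip is that your final display retains the full $\ubar{\eta}_0\int_0^s\vnorma{\nabla\pdt u}^2\dt$ on the left even though your choice $\veps=\ubar{\eta}_0$ absorbs it entirely; this does not affect the conclusion, which only needs the $\vnorma{\pdt u}^2$ and $\vnorma{\nabla u(s)}^2$ coefficients to stay positive.
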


\begin{proof}
Let $u_1$ and $u_2$ be two distinct solutions of the inverse problem (\ref{eq:expression_h}-\ref{eq:var_for}) with the same data. 
We subtract the variational formulation for $(u_1, h_1)$ from the one for $(u_2,h_2)$. Then, we obtain  for $u=u_1-u_2$ and $h=h_1-h_2$  that
\begin{multline}
	\scal{\pdt (\rho(t)u(t))}{\varphi} + \scal{\eta(t) \nabla \pdt u(t)}{\nabla \varphi} +  \scal{\kappa(t) \nabla u(t)}{\nabla \varphi} \\
	=\scal{f(u_1(t))-f(u_2(t))}{\varphi} +h(t) \scal{p(t)}{\varphi}, \quad \forall \varphi \in \hko{1},  \label{uni:var_for}
\end{multline}
and 
\begin{multline} \label{uni:expression_h}
	h(t)=
	\frac{1}{\ubar{\omega}(t)}\left[ \scal{u(t)}{\frac{\omega \pdt \rho(t)}{\rho(t)}}+\scal{\eta(t) \nabla \pdt u(t) }{\nabla \left(\frac{\omega}{\rho(t)}\right)} \right. \\ \left.+ \scal{\kappa(t)\nabla u(t)}{\nabla \left(\frac{\omega}{\rho(t)}\right)} -\scal{f(u_1(t))-f(u_2(t))}{\frac{\omega}{\rho(t)}}  \right].
\end{multline}
Using H\"{o}lder's inequality and \ref{as:p}, we first estimate \eqref{uni:expression_h} by taking its absolute value:
\begin{multline*} 
	\abs{h(t)} \leq \frac{1}{\abs{\ubar{\omega}(t)}} \left[ \vnorma{u(t)} \vnorma{\frac{\omega \pdt \rho(t)}{\rho(t)}}
	+ \vnorma{\eta(t) \nabla \pdt u(t)} \vnorma{\nabla \left(\frac{\omega}{\rho(t)}\right)} \right.\\ \left.
	+ \vnorma{\kappa(t)\nabla u(t)} \vnorma{\nabla \left(\frac{\omega}{\rho(t)}\right)}+ \vnorma{f(u_1(t))-f(u_2(t))} \vnorma{\frac{\omega}{\rho(t)}}\right].
\end{multline*}
Using \ref{as:DP:rho}, \ref{as:DP:eta}, \ref{as:DP:kappa}, \ref{as:DP:f}, \ref{as:omega} and \ref{as:p},  we obtain that 
\begin{equation*}
	\abs{h(t)} \le \frac{1}{\ubar{\omega}_m} \left[\frac{\ubar{\rho}_1^\prime}{\ubar{\rho}_0} \vnorma{u(t)}\vnorma{\omega} + M\left( \ubar{\eta}_1 \vnorma{\nabla \pdt u(t)} + \ubar{\kappa}_1 \vnorma{\nabla u(t)}\right) + \frac{L_f}{\ubar{\rho}_0} \vnorma{u(t)} \vnorma{\omega}\right].
\end{equation*}
Hence, we get by the Friedrichs inequality \eqref{eq:friedirchs} that 
\begin{equation}\label{IP:uni:estimate_h}
	\abs{h(t)} \le 
	C_1 \vnorma{\nabla u(t)} + \frac{ M  \ubar{\eta}_1}{\ubar{\omega}_m}  \vnorma{\nabla \pdt u(t)},
\end{equation}
where 
\[
C_1:= \frac{1}{\ubar{\omega}_m}  \left[\frac{\vnorma{\omega}C_{\textup F}}{\ubar{\rho}_0} \left(\ubar{\rho}_1^\prime + L_f \right) + M\ubar{\kappa}_1 \right].
\]
Now, we take $\varphi=\pdt u(t)$ in \eqref{uni:var_for} and  integrate the result over $t\in(0,s)\subset (0,T)$ to get 
\begin{multline} \label{00:eq:est2}
	\D\int_0^s \int_\Omega  \rho | \pdt u|^2\dX\dt +\D\int_0^s \int_\Omega  \eta \abs{\nabla \pdt u}^2 \dX\dt   + \frac{1}{2}\D\int_0^s \int_\Omega  \kappa \pdt \abs{\nabla u}^2 \dX \dt \\
	=\D\int_0^s \int_\Omega(f(u_1)-f(u_2))\pdt{u}\dX\dt +\int_0^s h(t) \D\int_\Omega   p \pdt{u}\dX \dt- \int_0^s \int_\Omega  (\pdt \rho) u (\pdt u) \dX \dt.
\end{multline}
The terms on the left-hand side and the first term on the right-hand side can be handled as in \Cref{thm:uniqueness_direct_problem}. 
For the second term on the right-hand side of \eqref{00:eq:est2}, we obtain that
\begin{align*}
	& \abs{\int_0^s h(t) \D \left(\int_\Omega   p \pdt{u}\dX \right)\dt} \\
	&\leqslant \int_0^s |h(t)| \vnorma{p(t)} \vnorma{\pdt u(t)} \dt \\
	& \leqslant \vnorma{p}_{\mathcal{X}} \int_0^s \left(C_1 \vnorma{\nabla u(t)} + \frac{ M  \ubar{\eta}_1}{\ubar{\omega}_m}  \vnorma{\nabla \pdt u(t)}\right)\vnorma{\pdt u(t)} \dt \\
	& \leqslant \vnorma{p}_{\mathcal{X}}^2 \frac{C_1^2}{4\veps_2} \int_0^s \vnorma{\nabla u(t)}^2\dt +  \vnorma{p}_{\mathcal{X}}^2  \frac{ M^2  \ubar{\eta}_1^2}{4\ubar{\omega}_m^2 \veps_3} \int_0^s \vnorma{\nabla \pdt u(t)}^2\dt \\
	& \qquad +  (\veps_2+\veps_3) \int_0^s \vnorma{\pdt u(t)}^2 \dt.
\end{align*}
Collecting all estimates, we have that 
\[
\alpha\int_0^s \vnorma{\pdt u}^2 \dt + \beta \int_0^s \vnorma{\nabla \pdt u}^2 \dt  + \ubar{\kappa}_0\vnorma{\nabla u(s)}^2   \leqslant  C_2\int_0^s \vnorma{\nabla u(t)}^2 \dt, 
\]
where 
\begin{align*}
	\alpha &:=2 \left( \ubar{\rho}_0 - \sum_{i=0}^3  \veps_i \right), \\
	\beta &:= 2 \left( \ubar{\eta}_0 -  \vnorma{p}_{\mathcal{X}}^2  \frac{ M^2  \ubar{\eta}_1^2}{4\ubar{\omega}_m^2 \veps_3}  \right), \\
	C_2 &:=  \ubar{\kappa}^\prime_1 + \frac{ \ubar{\rho}_1^{\prime 2} C_{\textup F}^2}{2 \veps_0}+\frac{L_f^2C_{\textup F}^2}{2\veps_1} + \vnorma{p}_{\mathcal{X}}^2 \frac{C_1^2}{2\veps_2}. 
\end{align*}
Choosing $\veps_3 = \ubar{\rho}_0 \zeta$, and using condition
\eqref{cond:uniqueness} in \ref{as:uniqueness}, we observe that $\beta > 0$.
Next, choosing $\veps_0=\veps_1=\veps_2=\frac{\ubar{\rho}_0(1-\zeta)}{6}$, we also have that $\alpha=  \ubar{\rho}_0(1-\zeta) > 0.$ Therefore, applying the Gr\"onwall lemma gives that 
\[
\int_0^s \vnorma{\pdt u(t)}^2 \dt +  \int_0^s \vnorma{\nabla \pdt u(t)}^2 \dt  + \vnorma{\nabla u(s)}^2 = 0, 
\]
from which we conclude that $u=0$ a.e. in $Q_T.$ Moreover, from \eqref{IP:uni:estimate_h}, we obtain that $h=0$ a.e. in $\Iopen.$
\end{proof}

Next, we will show the existence of a weak solution to problem (\ref{eq:expression_h}-\ref{eq:var_for}) by employing Rothe's method. The time-discrete weak formulation of the inverse problem (\ref{eq:expression_h}-\ref{eq:var_for}) reads as:
\medskip
\begin{center}
Find $u_i \in \hko{1}$ and $h_i\in \RR$ such that 
\begin{equation} \label{eq:disc_inv_prob}
	\scal{\delta (\rho_i u_i)}{\varphi} + \scal{\eta_i \nabla \delta u_i}{\nabla \varphi}  +  \scal{\kappa_i \nabla u_i}{\nabla \varphi} 
	=h_{i}\scal{p_i}{\varphi}+\scal{f(u_{i-1})}{\varphi}, \quad \forall\varphi \in \hko{1}, 
\end{equation}
and
\begin{multline} \label{disc:hi-1}
	h_{i}=\frac{1}{\ubar{\omega}_i}\left[ m^\prime_i +  \scal{u_{i-1}}{\frac{\omega \delta \rho_i}{\rho_i}} \right. \\ \left.
	+\scal{\eta_i \nabla \delta u_{i}}{\nabla \left(\frac{\omega}{\rho_i}\right)} + \scal{\kappa_i\nabla u_{i-1}}{\nabla \left(\frac{\omega}{\rho_i}\right)}   -\scal{f(u_{i-1})}{\frac{\omega}{\rho_i}}\right], 
\end{multline}
where
\begin{equation}
	u_{0}=\tilde{u}_0. \label{initC:disc_inv_prob}
\end{equation}
\end{center}
\medskip

Usually, the approach is to decouple the inverse problem at the discrete time steps (scheme: $h_i \rightarrow u_i$). Here, we consider $\scal{\eta_i \nabla \delta u_{i}}{\nabla \left(\frac{\omega}{\rho_i}\right)}$ instead of  $\scal{\eta_i \nabla \delta u_{i-1}}{\nabla \left(\frac{\omega}{\rho_i}\right)}$ in \eqref{disc:hi-1}, in order to obtain a uniformly defined scheme for
all $i \ge 1$. Indeed, the quantity
$\nabla \delta u_{i-1} = \tau^{-1}(\nabla u_{i-1} - \nabla u_{i-2})$
is not available at the first time step $i=1$.
So, for given $i\in \{1,\ldots,n\},$  we first solve \eqref{eq:disc_inv_prob} for $u_i$ and then derive $h_i$ from \eqref{disc:hi-1}. Consequently, solving \eqref{disc:hi-1} is equivalent with solving 
\begin{equation}\label{equiv:var_for_inv_disc_prob}
b_i(u_i,\varphi) = g_i(\varphi), \quad \forall \varphi \in \hko{1}, 
\end{equation}
where the bilinear form  $b_i: \hko{1}\times \hko{1}\to \RR$ is given by 
\begin{equation*}
b_i(u,\varphi) := \frac{1}{\tau}\scal{ \rho_i u}{\varphi} +  \frac{1}{\tau} \scal{\eta_i \nabla u}{\nabla \varphi}  +  \scal{\kappa_i \nabla u}{\nabla \varphi} - 
\frac{1}{\tau\ubar{\omega}_i}\scal{\eta_i \nabla  u}{\nabla \left(\frac{\omega}{\rho_i}\right)} \scal{p_i}{\varphi}
\end{equation*}
and the linear functional $g_i: \hko{1}\to \RR$ is defined by 
\begin{multline}
g_i(\varphi) := \frac{1}{\ubar{\omega}_i}\left[m^\prime_i +  
\scal{u_{i-1}}{\frac{\omega \delta \rho_i}{\rho_i}} 
- \frac{1}{\tau}\scal{\eta_i \nabla u_{i-1}}{\nabla \left(\frac{\omega}{\rho_i}\right)} \right. \\ \left.
+ \scal{\kappa_i\nabla u_{i-1}}{\nabla \left(\frac{\omega}{\rho_i}\right)}   -\scal{f(u_{i-1})}{\frac{\omega}{\rho_i}}
\right] \scal{p_i}{\varphi}+\\
\scal{f(u_{i-1})}{\varphi} + \frac{1}{\tau}\scal{\rho_{i-1}u_{i-1}}{\varphi} + \frac{1}{\tau} \scal{\eta_i \nabla u_{i-1}}{\nabla \varphi}. 
\end{multline}

The discrete inverse problem leads to an elliptic problem with a perturbation. It is condition \ref{as:uniqueness} that guarantees that this perturbation does not destroy coercivity, as illustrated in the next theorem. First, the differences between the previous strategies and the current Dirichlet strategy are summarised in \Cref{tab:discrete_strategies}.

\begin{table}[h!]
\centering
\caption{Comparison of discrete-time strategies for the inverse problem.}
\label{tab:discrete_strategies}
\begin{tabular}{|l|l|}
	\hline
	Reference & Strategy / Remarks \\ \hline
	\cite{Slodicka2014jcam,Grimmonprez2014b,VanBockstal2022a,VanBockstal2025} & $h_i \rightarrow u_i$:  $u_{i-1}$ available; decoupling straightforward \\ \hline
	\cite{VanBockstal2017,VanBockstal2020,VanBockstal2022c} & $h_i \rightarrow u_i$: time-discrete decoupling uses $\delta u_{i-1}$ (available) and $u_{i-1}$ \\ \hline
	This work & $u_i \rightarrow h_i$: $\delta u_{i-1}$ not available, perturbation approach ensuring coercivity under \ref{as:uniqueness}\\ \hline
\end{tabular}
\end{table}

\begin{theorem} \label{thm:IP_disc_existence}
Let the conditions \ref{as:DP:rho} until \ref{as:uniqueness} be fulfilled.  Then, for any $i=1,\ldots,n$, there exists a unique couple $\{u_i, h_i\}\in\hko{1}\times \RR$ solving \eqref{eq:disc_inv_prob}-\eqref{disc:hi-1}. 
\end{theorem}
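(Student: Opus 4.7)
The plan is to mimic the proof of \Cref{thm:dp:discrete} by casting the discrete inverse problem as an equivalent direct-like equation \eqref{equiv:var_for_inv_disc_prob} at each time step and applying the Lax-Milgram lemma. At a fixed $i\in\{1,\ldots,n\}$, assuming $u_{i-1}\in\hko{1}$ is already known, I would first verify that $b_i$ is $\hko{1}$-continuous and that $g_i$ is a bounded linear functional on $\hko{1}$, then establish $\hko{1}$-ellipticity of $b_i$ (which is the non-trivial step). Once $u_i$ is produced by Lax-Milgram, $h_i$ is defined unambiguously through \eqref{disc:hi-1}, and an induction on $i$ starting from $u_0=\tilde u_0$ closes the argument.

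The continuity of $b_i$ is straightforward: three of its terms are identical to those of $a_i$ in \Cref{thm:dp:discrete}, and the extra term is estimated by Cauchy--Schwarz using \ref{as:DP:eta}, \ref{as:p}, \ref{as:omega}, \ref{as:rho_extra} together with the Friedrichs inequality \eqref{eq:friedirchs} to bound $\vnorma{p_i}$, $\vnorma{\nabla(\omega/\rho_i)}\leq M$, $\abs{\ubar{\omega}_i}\geq \ubar{\omega}_m$, giving a bound of the shape $\frac{\ubar\eta_1 M\vnorma{p_i}}{\tau\ubar\omega_m}\vnorma{u}_{\hk1}\vnorma{\varphi}_{\hk1}$. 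Boundedness of $g_i$ follows analogously, using the hypotheses on $m$, $p$, $\omega/\rho$ and the regularity of $u_{i-1}$, and mirrors the estimate of $l_i$ in \Cref{thm:dp:discrete} up to an additional factor $\vnorma{p_i}$.

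The hard part is coercivity, and this is exactly where \ref{as:uniqueness} enters. Setting $\varphi=u$, the three Gelfand-type terms contribute
\[
\frac{\ubar\rho_0}{\tau}\vnorma{u}^2+\frac{\ubar\eta_0}{\tau}\vnorma{\nabla u}^2+\ubar\kappa_0\vnorma{\nabla u}^2,
\]
while the sign-indefinite term is estimated by Cauchy--Schwarz and the $\veps$-Young inequality as
\[
\left|\frac{1}{\tau\ubar{\omega}_i}\scal{\eta_i\nabla u}{\nabla(\omega/\rho_i)}\scal{p_i}{u}\right|
\leq \frac{M\ubar\eta_1\vnorma{p_i}}{\tau\ubar\omega_m}\vnorma{\nabla u}\vnorma{u}
\leq \frac{\ubar\eta_0}{\tau}\vnorma{\nabla u}^2+\frac{M^2\ubar\eta_1^2\vnorma{p_i}^2}{4\tau\ubar\omega_m^2\ubar\eta_0}\vnorma{u}^2.
\]
The first piece absorbs the $\frac{\ubar\eta_0}{\tau}\vnorma{\nabla u}^2$ term; by \eqref{cond:uniqueness} the coefficient of $\vnorma{u}^2$ satisfies $\frac{M^2\ubar\eta_1^2\vnorma{p_i}^2}{4\ubar\omega_m^2\ubar\eta_0}\leq \vnorma{p}_{\mathcal X}^2\frac{M^2\ubar\eta_1^2}{4\ubar\omega_m^2\ubar\eta_0}<\zeta\ubar\rho_0$, so that it is strictly dominated by $\frac{\ubar\rho_0}{\tau}\vnorma{u}^2$. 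Collecting terms yields
\[
b_i(u,u)\;\geq\;(1-\zeta)\frac{\ubar\rho_0}{\tau}\vnorma{u}^2+\ubar\kappa_0\vnorma{\nabla u}^2\;\geq\;\min\!\left\{(1-\zeta)\frac{\ubar\rho_0}{\tau},\ubar\kappa_0\right\}\vnorma{u}_{\hk1}^2,
\]
which is the desired coercivity since $\zeta<1$.

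With continuity, coercivity, and boundedness of $g_i$ in hand, the Lax-Milgram lemma gives the existence of a unique $u_i\in\hko{1}$ solving \eqref{equiv:var_for_inv_disc_prob}. Substituting this $u_i$ into \eqref{disc:hi-1} determines $h_i\in\RR$ uniquely; conversely this construction shows that \eqref{eq:disc_inv_prob}--\eqref{disc:hi-1} are equivalent to \eqref{equiv:var_for_inv_disc_prob}, so the pair $\{u_i,h_i\}$ is the unique solution of the coupled time-discrete system. An induction on $i=1,\ldots,n$ starting from $u_0=\tilde u_0\in\hko{1}$ (\ref{as:DP:u0}) completes the proof. The whole argument rests on \eqref{cond:uniqueness}, which is the only place where the structural restriction on $p,\omega,\eta,\rho$ really bites; without it, ellipticity of $b_i$ cannot be guaranteed by this simple absorption argument.
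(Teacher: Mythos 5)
Your proposal is correct and follows essentially the same route as the paper: recast the coupled system as \eqref{equiv:var_for_inv_disc_prob}, verify the Lax--Milgram hypotheses with the ellipticity of $b_i$ as the only nontrivial step, and recover $h_i$ from \eqref{disc:hi-1}. The only (cosmetic) difference is the choice of Young parameter in absorbing the indefinite term --- the paper takes $\veps=\ubar{\rho}_0\zeta$ on the $\vnorma{u}^2$ side and lands on the coercivity constant $\ubar{\kappa}_0/C_{\textup{F}}^2$ via Friedrichs, whereas you take $\veps=\ubar{\eta}_0/\tau$ on the $\vnorma{\nabla u}^2$ side and get $\min\{(1-\zeta)\ubar{\rho}_0/\tau,\ubar{\kappa}_0\}$; both rest on \eqref{cond:uniqueness} in exactly the same way.
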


\begin{proof}
We only check that the bilinear form $b_i$ is $\hko{1}$-elliptic for any $i\in \{1,\ldots,n\}$. Applying the $\veps$-Young inequality with $\veps = \ubar{\rho}_0 \zeta$ (where $\zeta$ is defined by \ref{as:uniqueness}),  we have 
\begin{align*}
	\abs{\frac{1}{\ubar{\omega}_i}\scal{\eta_i \nabla  u}{\nabla \left(\frac{\omega}{\rho_i}\right)} \scal{p_i}{u}} 
	&\le  \frac{1}{\ubar{\omega}_m} \ubar{\eta}_1 \vnorma{\nabla u} M \vnorma{p}_{\mathcal{X}} \vnorma{u} \\
	& \le \ubar{\rho}_0 \zeta \vnorma{u}^2 + \frac{1}{4\ubar{\rho}_0 \zeta} \frac{\ubar{\eta}_1^2  M^2 \vnorma{p}_{\mathcal{X}}^2 }{\ubar{\omega}_m^2} \vnorma{\nabla u}^2,
\end{align*}
where $M=\sup\limits_{t\in[0,T]} \vnorma{\nabla \left(\dfrac{\omega}{\rho(t)}\right)}.$
Hence, we have for all $u\in \hko{1}$ and $i\in \{1,\ldots,n\}$ that 
\begin{align*}
	b_i(u,u) & \ge \frac{\ubar{\rho}_0}{\tau} \vnorma{u}^2+\frac{\ubar{\eta}_0}{\tau} \vnorma{\nabla u}^2   + \ubar{\kappa}_0 \vnorma{\nabla u}^2 - \abs{\frac{1}{\tau\ubar{\omega}_i}\scal{\eta_i \nabla  u}{\nabla \left(\frac{\omega}{\rho_i}\right)} \scal{p_i}{u}}  \\
	& \ge \frac{\ubar{\rho}_0}{\tau} \left(1-\zeta\right)\vnorma{u}^2+ \frac{1}{\tau} \left(\ubar{\eta}_0 -  \frac{\ubar{\eta}_1^2  M^2 \vnorma{p}_{\mathcal{X}}^2 }{4\ubar{\rho}_0 \zeta\ubar{\omega}_m^2} \right) \vnorma{\nabla u}^2   + \ubar{\kappa}_0 \vnorma{\nabla u}^2 \\
	& \stackrel{\eqref{eq:friedirchs}}{\ge} \frac{\ubar{\kappa}_0}{C_{\textup{F}}^2} \vnorma{u}_{\hk{1}}^2. 
\end{align*}
Therefore, starting from $\tilde{u}_0\in \hko{1},$   we get the existence of $u_i \in \hko{1}$ for any  $i=1,\ldots,n,$ since all conditions of the Lax-Milgram lemma are satisfied. Moreover, $h_i\in \RR$ is uniquely determined by \eqref{disc:hi-1}. 
\end{proof}

As for the direct problem, we now derive the a priori estimates for the inverse problem.

\begin{lemma}\label{inv_prob:_est1}
Let the assumptions \ref{as:DP:rho} until \ref{as:uniqueness} be fulfilled. Then, there exists positive constants $C$ and $\tau_0$ such that 
\begin{equation}\label{est:invprob_discrsolu}
	\max\limits_{1\leq j\leq n}\vnorma{u_j}_{\hk{1}}^2
	+ \sum_{i=1}^n \vnorma{\delta u_i}_{\hk{1}}^2 \tau +  \sum_{i=1}^n \vnorma{u_i - u_{i-1}}_{\hk{1}}^2+ \sum_{i=1}^n |h_i|^2\tau \le C
\end{equation}
for any $\tau < \tau_0.$
\end{lemma}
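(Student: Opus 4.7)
The strategy is to mimic the proof of \Cref{direct_problem:a_priori_estimate} for the direct problem, with the additional twist that the source contribution $h_i\scal{p_i}{\delta u_i}\tau$ must be controlled using the discrete analogue of the bound \eqref{IP:uni:estimate_h} derived in the uniqueness proof, and that the condition \eqref{cond:uniqueness} must be invoked to absorb the resulting $\vnorma{\nabla \delta u_i}^2$ contribution.

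First, I would set $\varphi = \delta u_i\,\tau$ in \eqref{eq:disc_inv_prob} and sum for $i=1,\ldots,j$ with $1\le j\le n$. The LHS is treated exactly as in \Cref{direct_problem:a_priori_estimate}, producing (up to $\tau$ small enough and $\veps$-Young absorption) the nonnegative quantity
\[
\ubar{\rho}_0 \sum_{i=1}^j \vnorma{\delta u_i}^2 \tau + \ubar{\eta}_0 \sum_{i=1}^j \vnorma{\nabla \delta u_i}^2 \tau + \frac{\ubar{\kappa}_0}{2}\vnorma{\nabla u_j}^2 + \frac{\ubar{\kappa}_0}{2}\sum_{i=1}^j \vnorma{\nabla u_i - \nabla u_{i-1}}^2,
\]
minus lower-order Gr\"onwall-type terms of the form $\sum \vnorma{\nabla u_i}^2\tau$ and a constant depending on $\tilde u_0$. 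The $f(u_{i-1})$ contribution on the RHS is also handled exactly as in the direct-problem proof.

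The new ingredient is the source term $\sum_{i=1}^j h_i \scal{p_i}{\delta u_i}\tau$. Starting from \eqref{disc:hi-1} and applying H\"older, \ref{as:DP:rho}--\ref{as:DP:kappa}, \ref{as:DP:f}, \ref{as:omega}, \ref{as:p} and Friedrichs \eqref{eq:friedirchs}, one obtains the discrete analogue of \eqref{IP:uni:estimate_h}:
\[
\abs{h_i} \le \frac{\abs{m^\prime_i}}{\ubar{\omega}_m} + C_3\bigl(1 + \vnorma{\nabla u_{i-1}}\bigr) + \frac{M\ubar{\eta}_1}{\ubar{\omega}_m}\vnorma{\nabla \delta u_i},
\]
with $C_3$ depending only on the data. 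Combining this with $\vnorma{p_i}\le \vnorma{p}_{\mathcal{X}}$ and splitting $\abs{h_i\scal{p_i}{\delta u_i}}\le \vnorma{p}_{\mathcal{X}}\abs{h_i}\vnorma{\delta u_i}$ by Young's inequality yields bounds of the type
\[
\sum_{i=1}^j \abs{h_i\scal{p_i}{\delta u_i}}\tau \le a\sum_{i=1}^j \vnorma{\nabla \delta u_i}^2\tau + b\sum_{i=1}^j \vnorma{\delta u_i}^2\tau + C_4\sum_{i=1}^j \vnorma{\nabla u_{i-1}}^2\tau + C_5,
\]
where the constant $C_5$ absorbs $\sum \abs{m^\prime_i}^2\tau$ (finite by \ref{as:m}) and the contributions from $C_3$. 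The key observation is that, by choosing $a = \ubar{\eta}_0\zeta$ (so that $b = \vnorma{p}_{\mathcal{X}}^2 M^2\ubar{\eta}_1^2/(4\ubar{\omega}_m^2\ubar{\eta}_0\zeta)$), condition \eqref{cond:uniqueness} gives exactly $b<\ubar{\rho}_0$, so that both $a\sum\vnorma{\nabla\delta u_i}^2\tau$ and $b\sum\vnorma{\delta u_i}^2\tau$ can be absorbed into the LHS, leaving coercive remainders $(\ubar{\eta}_0 - a)\sum\vnorma{\nabla\delta u_i}^2\tau$ and $(\ubar{\rho}_0 - b)\sum\vnorma{\delta u_i}^2\tau$. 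This absorption is the main technical obstacle, and it is precisely what \eqref{cond:uniqueness} was tailored to enable.

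After this absorption, the inequality reduces to
\[
\vnorma{\nabla u_j}^2 + \sum_{i=1}^j\Bigl(\vnorma{\delta u_i}^2 + \vnorma{\nabla \delta u_i}^2\Bigr)\tau + \sum_{i=1}^j \vnorma{\nabla u_i - \nabla u_{i-1}}^2 \le C_6 + C_7\sum_{i=1}^j\vnorma{\nabla u_i}^2\tau,
\]
and the discrete Gr\"onwall lemma (for $\tau$ small enough) delivers the uniform bounds on $\vnorma{u_j}_{\hk{1}}$, $\sum \vnorma{\delta u_i}_{\hk{1}}^2\tau$ and $\sum \vnorma{u_i - u_{i-1}}_{\hk{1}}^2$ (the $\lp{2}$-part of $\vnorma{u_j}$ is recovered through $\vnorma{u_j}\le \vnorma{\tilde u_0} + \sum\vnorma{\delta u_i}\tau$ and Cauchy--Schwarz, or directly via the Friedrichs inequality \eqref{eq:friedirchs}). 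Finally, feeding these uniform bounds back into the estimate of $\abs{h_i}$, squaring and summing produces $\sum_{i=1}^n\abs{h_i}^2\tau\le C$, completing the proof.
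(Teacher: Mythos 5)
Your proposal is correct and follows essentially the same route as the paper: test with $\varphi=\delta u_i\tau$, bound $\abs{h_i}$ from \eqref{disc:hi-1} by the discrete analogue of \eqref{IP:uni:estimate_h}, use $\veps$-Young to absorb the $\vnorma{\nabla\delta u_i}^2$ and $\vnorma{\delta u_i}^2$ contributions into the left-hand side via \eqref{cond:uniqueness}, and conclude with the discrete Gr\"onwall lemma and a final summation for $\sum_i\abs{h_i}^2\tau$. The only (immaterial) difference is the allocation of the Young weight: you take $a=\ubar{\eta}_0\zeta$ on the gradient term so that $b<\ubar{\rho}_0$, whereas the paper takes $\veps_5=\ubar{\rho}_0\zeta$ on the $\vnorma{\delta u_i}^2$ term so that the gradient coefficient stays below $\ubar{\eta}_0$; both exploit the same symmetry of \eqref{cond:uniqueness}.
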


\begin{proof}
Setting $\varphi = \delta u_i \tau$ in \eqref{eq:disc_inv_prob} and summing up the result for $i=1,\ldots,j$ with $1\le j\le n$ give
\begin{multline}\label{direct_problem:a_priori_estimate:eq1}
	\sum_{i=1}^j \scal{\delta (\rho_i u_i)}{\delta u_i}\tau + \sum_{i=1}^j\scal{\eta_i \nabla \delta u_i}{\nabla \delta  u_i}\tau  +  \sum_{i=1}^j \scal{\kappa_i \nabla u_i}{\nabla  \delta  u_i} \tau\\
	=\sum_{i=1}^j h_{i}\scal{p_i}{\delta  u_i}\tau+\sum_{i=1}^j \scal{f(u_{i-1})}{\delta  u_i}\tau
\end{multline}
We follow the lines of \Cref{direct_problem:a_priori_estimate}. The difference lies in estimating the first term on the right-hand side of \eqref{direct_problem:a_priori_estimate:eq1}. In doing so, we first estimate $h_i$ given by \eqref{disc:hi-1}. 
We obtain
\begin{multline*}
	|h_{i}|\leq\frac{1}{\abs{\ubar{\omega}_i}}\left[\abs{ m^\prime_i} +  \frac{\ubar{\rho}_1^\prime}{\ubar{\rho}_0}\vnorma{u_{i-1}}\vnorma{\omega} +\ubar{\eta}_1 \vnorma{\nabla \delta u_{i}}\vnorma{\nabla \left(\frac{\omega}{\rho_i}\right)} \right.\\
	\left. + \ubar{\kappa}_1\vnorma{\nabla u_{i-1}}\vnorma{\nabla \left(\frac{\omega}{\rho_i}\right)}  +\frac{1}{\ubar{\rho}_0}\vnorma{f(u_{i-1})}\vnorma{\omega}\right]. 
\end{multline*}
Using the Friedrichs inequality \eqref{eq:friedirchs}, we get that 
\begin{equation}\label{ineq:hi}
	|h_{i}|\leq\frac{1}{{\ubar{\omega}_m}}\abs{ m^\prime_i} +  C_1\vnorma{\nabla u_{i-1}}
	+\frac{\ubar{\eta}_1 M}{\ubar{\omega}_m} \vnorma{\nabla \delta u_{i}}
	+\frac{1}{\ubar{\rho}_0\ubar{\omega}_m}\vnorma{f(u_{i-1})}\vnorma{\omega}, 
\end{equation}
where 
\[
C_1:=\frac{1}{{\ubar{\omega}_m}}\left(\frac{\ubar{\rho}_1^\prime C_{\textup{F}}}{\ubar{\rho}_0}\vnorma{\omega}  + M \ubar{\kappa}_1\right), \quad M=\sup\limits_{t\in[0,T]} \vnorma{\nabla \left(\dfrac{\omega}{\rho(t)}\right)}.
\]
Employing the $\veps$-Young inequality, we obtain that
\begin{align*}
	\left|\sum_{i=1}^j h_{i} \scal{p_i}{\delta  u_i}\tau\right|
	& \leq  \sum_{i=1}^j \left(\vnorma{p}_{{\mathcal X}} |h_{i}|\sqrt{\tau}\right)\left(\vnorma{\delta  u_i}\sqrt{\tau}\right) \\
	&  \leq \frac{\vnorma{p}_{\mathcal{X}}^2}{{\ubar{\omega}_m^2}4\veps_3}   \sum_{i=1}^j \abs{ m^\prime_i}^2\tau +  \frac{ \vnorma{p}_{\mathcal{X}}^2 C_1^2}{4\veps_4} \sum_{i=1}^j \vnorma{\nabla u_{i-1}}^2 \tau \\
	& \quad +\frac{\vnorma{p}_{\mathcal{X}}^2\ubar{\eta}_1^2 M^2}{\ubar{\omega}_m^2 4 \veps_5} \sum_{i=1}^j \vnorma{\nabla \delta u_{i}}^2 \tau 
	+\frac{\vnorma{p}_{\mathcal{X}}^2\vnorma{\omega}^2}{\ubar{\rho}_0^2\ubar{\omega}_m^2 4 \veps_6} \sum_{i=1}^j \vnorma{f(u_{i-1})}^2 \tau \\
	& \quad + \left(\sum_{i=3}^6 \veps_i \right) \sum_{i=1}^j  \vnorma{\delta u_i}^2 \tau.
\end{align*}
Therefore, using \eqref{eq:inequality_nonlinear_f}, we get the estimate
\begin{align*}
	\left|\sum_{i=1}^j h_{i} \scal{p_i}{\delta  u_i}\tau\right|
	&  \leq \frac{\vnorma{p}_{\mathcal{X}}^2}{{\ubar{\omega}_m^2}4\veps_3}   \sum_{i=1}^j \abs{ m^\prime_i}^2\tau 
	+  \frac{ \vnorma{p}_{\mathcal{X}}^2 C_1^2}{4\veps_4} \left( \vnorma{\nabla \tilde{u}_0}^2 +  \sum_{i=1}^{j-1} \vnorma{\nabla u_{i}}^2 \tau\right) \\
	&  +\frac{\vnorma{p}_{\mathcal{X}}^2\vnorma{\omega}^2}{\ubar{\rho}_0^2\ubar{\omega}_m^2 4 \veps_6} \ubar{L}_f \left(T + C_{\textup{F}}^2\vnorma{\nabla \tilde{u}_{0}}^2 +  C_{\textup{F}}^2 \sum_{i=1}^{j-1} \vnorma{\nabla u_{i}}^2 \tau \right)  \\
	&  + \left(\sum_{i=3}^6 \veps_i \right) \sum_{i=1}^j  \vnorma{\delta u_i}^2 \tau  +\frac{\vnorma{p}_{\mathcal{X}}^2\ubar{\eta}_1^2 M^2}{\ubar{\omega}_m^2 4 \veps_5} \sum_{i=1}^{j} \vnorma{\nabla \delta u_{i}}^2 \tau.
\end{align*}
Hence, using the estimates obtained in  \Cref{direct_problem:a_priori_estimate} (with $F=0$), we obtain 
\begin{multline*}
	\alpha \sum_{i=1}^j \vnorma{\delta u_i}^2 \tau + \beta \sum_{i=1}^j \vnorma{\nabla \delta u_i}^2 \tau 
	+ \ubar{\kappa}_0  \vnorma{\nabla u_j}^2 +  \ubar{\kappa}_0 \sum_{i=1}^j \vnorma{\nabla u_i - \nabla u_{i-1}}^2 \\ 
	\le C_0 +  C_2 \sum_{i=1}^j \vnorma{\nabla u_i}^2 \tau,
\end{multline*}
where 
\begin{align*}
	\alpha &:=2 \left( \ubar{\rho}_0 - \sum_{i=1}^6  \veps_i \right), \\
	\beta &:= 2 \left( \ubar{\eta}_0 -  \vnorma{p}_{\mathcal{X}}^2  \frac{ M^2  \ubar{\eta}_1^2}{4\ubar{\omega}_m^2 \veps_5}  \right), \\
	C_0& : =  \left(\frac{1}{2\veps_2} + \frac{\vnorma{p}_{\mathcal{X}}^2\vnorma{\omega}^2}{\ubar{\rho}_0^2\ubar{\omega}_m^2 2 \veps_6}  \right) \ubar{L}_f  T 
	+ \frac{\vnorma{p}_{\mathcal{X}}^2}{{\ubar{\omega}_m^2}2\veps_3} \sum_{i=1}^j \abs{ m^\prime_i}^2\tau  \\
	&   \qquad 
	+ \left( \ubar{\kappa}_1+  \ubar{\kappa}_1^\prime + \frac{\ubar{L}_f C_{\textup{F}}^2}{2\veps_2} + \frac{ \vnorma{p}_{\mathcal{X}}^2 C_1^2}{2\veps_4} + \frac{\vnorma{p}_{\mathcal{X}}^2\vnorma{\omega}^2\ubar{L}_f C_{\textup{F}}^2}{\ubar{\rho}_0^2\ubar{\omega}_m^2 2 \veps_6}  \right) \vnorma{\nabla \tilde{u}_0}^2, \\
	C_2 &:=    \ubar{\kappa}^\prime_1 + \frac{ \ubar{\rho}_1^{\prime 2} C_{\textup F}^2}{2 \veps_1}+\frac{L_f^2C_{\textup F}^2}{2\veps_2}  + \frac{ \vnorma{p}_{\mathcal{X}}^2 C_1^2}{2\veps_4} + \frac{\vnorma{p}_{\mathcal{X}}^2\vnorma{\omega}^2\ubar{L}_f C_{\textup{F}}^2}{\ubar{\rho}_0^2\ubar{\omega}_m^2 2 \veps_6}.
\end{align*}
Now, putting  $\veps_5 = \ubar{\rho}_0 \zeta$ implies that $\beta >0$ due to the condition \eqref{cond:uniqueness} in \ref{as:uniqueness}. 
Next, choosing $\veps_0=\veps_1=\veps_2 = \veps_3 = \veps_4=\veps_6=\frac{\ubar{\rho}_0}{5\ubar{n}}$ with $\ubar{n}>\frac{1}{1-\zeta}> 1$ (as $\zeta<1$), we also have that $\alpha = \ubar{\rho}_0 \left(1-\zeta - \frac{1}{\ubar{n}}\right) > 0.$ Therefore, applying the Gr\"onwall lemma gives the existence of a positive constant $C_3$ such that 
\[
\sum_{i=1}^j \vnorma{\delta u_i}^2 \tau +\sum_{i=1}^j \vnorma{\nabla \delta u_i}^2 \tau 
+\vnorma{\nabla u_j}^2 +  \sum_{i=1}^j \vnorma{\nabla u_i - \nabla u_{i-1}}^2 \le C_3
\]
for sufficiently small $\tau \le \tau_0.$ Moreover, from \eqref{ineq:hi} and \ref{as:m}, we obtain that 
\[
\sum_{i=1}^j \abs{h_i}^2 \tau \le C_4. 
\]
\end{proof}

Now, using the Rothe's functions \eqref{Rothestepfun:u} and \eqref{Rothefun:u},
we rewrite the discrete variational formulation (\ref{eq:disc_inv_prob}-\ref{disc:hi-1}) as follows ($t\in(0,T]$)
\begin{multline}\label{eq:disc_inv_prob:whole_time_frame}
\scal{\overline{\rho}_n(t-\tau) \pdt U_n(t) +  \pdt \rho_n(t) \overline U_n(t)}{\varphi} + \scal{\overline{\eta}_n(t) \nabla \pdt U_n(t)}{\nabla \varphi} \\ +  \scal{\overline{\kappa}_n(t) \nabla \overline U_n(t)}{\nabla \varphi} 
={\overline{h}_n(t)}\scal{\overline{p}_n(t)}{\varphi}+\scal{{f}(\overline{U}_n(t-\tau))}{\varphi}, \quad \forall\varphi \in \hko{1},
\end{multline}
and
\begin{multline} \label{disc:hi-1:whole_time_frame}
\overline{h}_n(t)=\frac{1}{\overline{\ubar{\omega}}_{n}(t)}\left[ {\overline{m^\prime}_n(t)}+  \scal{\overline{U}_{n}(t-\tau)}{\left(\frac{\omega {\pdt \rho}_n(t)}{\overline{\rho}_n(t)}\right)} +\right. \\ \left.
\scal{\overline{\eta}_n(t) {\nabla \pdt U_{n}}(t) }{\nabla \left(\frac{\omega}{\overline{\rho}_n(t)}\right)} + \scal{\overline{\kappa}_n(t)\nabla \overline{U}_n(t-\tau)}{\nabla \left(\frac{\omega}{\overline{\rho}_n(t)}\right)} \right. \\ \left.   -\scal{f(\overline{U}_{n}(t-\tau))}{\frac{\omega}{\overline{\rho}_n(t)}} \right]. 
\end{multline}

Now, we are ready to show the existence of a solution to (\ref{eq:expression_h}-\ref{eq:var_for}).

\begin{theorem}\label{thm:existence_inverse_problem}
Let the assumptions \ref{as:DP:rho} until \ref{as:uniqueness} be fulfilled.  Then, there exists a unique weak solution couple $\{u,h\}$ to (\ref{eq:expression_h}-\ref{eq:var_for}) satisfying 
\begin{equation*}
	u \in \cIX{\hko{1}} \quad \text{with} \quad  \pdt u \in \lpkIX{2}{\hko{1}} \quad \text{and} \quad h \in \Leb^2\Iopen.
\end{equation*}
\end{theorem}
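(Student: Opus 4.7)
The plan is to follow the blueprint of Theorem~\ref{thm:existence_direct_problem}, now applied to the reformulation (\ref{eq:disc_inv_prob:whole_time_frame}-\ref{disc:hi-1:whole_time_frame}) in terms of Rothe functions, and to use Theorem~\ref{thm:uniq_inv_problem} for uniqueness. First, from Lemma~\ref{inv_prob:_est1} the sequence $\overline{U}_n$ is bounded in $\Leb^\infty(\Iopen,\hko{1})$, $\pdt U_n$ in $\lpkIX{2}{\hko{1}}$, and $\overline{h}_n$ in $\Leb^2\Iopen$. Combining the compact embedding $\hko{1}\imbed\imbed\lp{2}$ with the reflexivity of $\lpkIX{2}{\hko{1}}$ and $\Leb^2\Iopen$, I would extract a subsequence $\{n_l\}$ and limits $u \in \cIX{\hko{1}}$ with $\pdt u \in \lpkIX{2}{\hko{1}}$ and $h \in \Leb^2\Iopen$ for which \eqref{convergence:rothe_functions_dp} and \eqref{weak_convergence_time_der} are valid, $\overline{h}_{n_l} \rightharpoonup h$ weakly in $\Leb^2\Iopen$, and by \eqref{dir:str:conv:L2}-\eqref{dir:str:conv:L22}, $\overline U_{n_l}$ and $\overline U_{n_l}(\cdot-\tau)$ converge strongly to $u$ in $\lpkIX{2}{\lp{2}}$.

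Second, I would pass to the limit in the time-integrated form of \eqref{eq:disc_inv_prob:whole_time_frame} over $(0,\eta)\subset\Iopen$. The linear terms and the nonlinearity $f(\overline{U}_{n_l}(t-\tau))$ are handled exactly as in Theorem~\ref{thm:existence_direct_problem}, using the data regularity \ref{as:DP:rho}-\ref{as:DP:kappa}, the Lipschitz property \ref{as:DP:f} combined with strong $\lpkIX{2}{\lp{2}}$-convergence of the shifted Rothe step functions, and the weak convergences for $\nabla \pdt U_{n_l}$ and $\nabla \overline U_{n_l}$. The new term $\int_0^\eta \overline{h}_{n_l}(t)\scal{\overline{p}_{n_l}(t)}{\varphi}\dt$ is the pairing of the weakly converging $\overline{h}_{n_l}$ with the scalar sequence $t\mapsto \scal{\overline{p}_{n_l}(t)}{\varphi}$, which converges strongly in $\Cont(\I)\imbed\Leb^2\Iopen$ because $\overline{p}_{n_l}\to p$ in $\cIX{\lp{2}}$ by \ref{as:p}; hence this product converges to $\int_0^\eta h(t)\scal{p(t)}{\varphi}\dt$. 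Differentiating in $\eta$ yields \eqref{eq:var_for}.

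Third, I would identify $h$ with the expression \eqref{eq:expression_h}. Testing \eqref{disc:hi-1:whole_time_frame} against an arbitrary $\psi \in \Leb^2\Iopen$ and integrating over $\Iopen$, I pass to the limit using: strong convergence of $\overline{\rho}_n^{-1}\omega$ and $\nabla(\overline{\rho}_n^{-1}\omega)$ in $\cIX{\lp{2}}$ thanks to \ref{as:rho_extra}, strong convergence of $\overline{\ubar{\omega}}_n^{-1}$ in $\Cont(\I)$ from \ref{as:p}, $\overline{m^\prime}_n \to m^\prime$ in $\Leb^2\Iopen$ from \ref{as:m}, the Lipschitz continuity of $f$ together with strong $\Leb^2$-convergence of $\overline{U}_n(\cdot-\tau)$, and weak $\lpkIX{2}{\Lp{2}}$-convergence of $\nabla \pdt U_{n_l}$ tested against the strongly converging $\nabla(\overline{\rho}_n^{-1}\omega)\,\psi$. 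This identifies the weak $\Leb^2\Iopen$-limit of $\overline{h}_{n_l}$ with the right-hand side of \eqref{eq:expression_h}, as desired. The initial condition is inherited from $U_{n_l}(0)=\tilde u_0$ and the convergence $U_{n_l}\to u$ in $\cIX{\lp{2}}$. Uniqueness from Theorem~\ref{thm:uniq_inv_problem} forces the entire Rothe sequence to converge to $(u,h)$.

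The main obstacle I foresee is the passage to the limit in the term $\scal{\overline{\eta}_n(t)\nabla \pdt U_n(t)}{\nabla(\omega/\overline{\rho}_n(t))}$ inside the expression for $\overline{h}_n$, since $\nabla \pdt U_{n_l}$ converges only weakly in $\lpkIX{2}{\Lp{2}}$; this forces one to work in the integrated-against-$\psi$ formulation and to exploit the strong $\cIX{\Lp{2}}$-convergence of the test factor $\nabla(\omega/\overline{\rho}_n)$, which in turn is exactly what assumption \ref{as:rho_extra} is designed to deliver. Note that the smallness condition \eqref{cond:uniqueness} plays no role at the limit-passage stage itself; it was already consumed when producing the uniform bound on $\overline{h}_n$ in Lemma~\ref{inv_prob:_est1} (via the $\varepsilon$-Young balance with $\varepsilon_5=\ubar\rho_0\zeta$).
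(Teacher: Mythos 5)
Your proposal is correct and follows essentially the same route as the paper: extract weakly/strongly convergent subsequences from the a priori bounds of \Cref{inv_prob:_est1}, pass to the limit in the time-integrated form of \eqref{eq:disc_inv_prob:whole_time_frame} treating $\overline{h}_{n_l}\scal{\overline{p}_{n_l}}{\varphi}$ as a weak--strong pairing, identify the weak $\Leb^2$-limit of $\overline{h}_{n_l}$ with \eqref{eq:expression_h}, and invoke \Cref{thm:uniq_inv_problem} for uniqueness and convergence of the whole sequence. In fact, your third step spells out the duality argument (testing \eqref{disc:hi-1:whole_time_frame} against $\psi\in\Leb^2\Iopen$ and using the strong $\cIX{\Lp{2}}$-convergence of $\nabla(\omega/\overline{\rho}_n)$ guaranteed by \ref{as:rho_extra}) that the paper compresses into ``we easily see that''.
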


\begin{proof}
From \Cref{inv_prob:_est1}, we obtain that the convergences stated in \eqref{convergence:rothe_functions_dp}, \eqref{weak_convergence_time_der} and \eqref{dir:str:conv:L22} are valid. Moreover, we have that 
\[
\vnorma{\overline{h}_{n}}_{\Leb^2(0,T)}^2\le C, 
\]
i.e. there exists a function $g\in \Leb^2\Iopen$ such that $\overline{h}_{n_l} \rightharpoonup g$ in $\Leb^2\Iopen.$ We easily see that 
\[
\int_0^T \overline{h}_{n_l}(t) \phi(t)\dt \rightarrow \int_0^T {h}(t) \phi(t)\dt \quad \text{ for all } \phi \in \Leb^2\Iopen \text { as } l \to \infty, 
\]
where $h$ is given by \eqref{eq:expression_h}. Hence, by the uniqueness of the weak limit, we have that $g=h.$ Next, we integrate \eqref{eq:disc_inv_prob:whole_time_frame} for $n=n_l$ over $t\in(0,\eta)\subset \Iopen$ and pass to the limit $l\to \infty.$ Afterwards, we differentiate with respect to $\eta$ to obtain that \eqref{eq:var_for} is satisfied. The uniqueness of a solution and the convergence of the whole Rothe sequences follow from \Cref{thm:uniq_inv_problem}. 
\end{proof} 

\begin{remark}[Semilinear heat equation]
The approach proposed in this paper for tackling the inverse source problem (\ref{eq:problem}-\ref{sec:direct_problem}) remains valid for the semilinear anisotropic heat equation (i.e., \cref{eq:problem} with $\eta=0$). In this context, we also refer to \cite[Remark~1.2]{VanBockstal2022c} in which a similar remark was made for isotropic heat conduction (as a special case of the isotropic thermoelastic system). However, in this paper, the regularity of the temperature (which becomes $\pdt u \in \lpkIX{2}{\lp{2}}$ for $\eta=0$)  has been handled more adequately compared to \cite[Theorem~4.1]{VanBockstal2022c}. 
\end{remark}

\section{Numerical experiments}
\label{sec:experiments}

In this section, we solve the inverse
problem (\ref{eq:problem}-\ref{eq:add:cond}) numerically by the aid of the finite element method.
First, we introduce in \Cref{subsec:reformulation_discrete_problem} an approach for solving the discrete inverse problem \eqref{equiv:var_for_inv_disc_prob} and summarise the strategy in \Cref{algorithm}. 
Afterwards, we present in \Cref{subsec:noisy_free} a convergence test for noise-free data in one spatial dimension. Next, in \Cref{subsec:1dnoisy}, we perform 
noisy experiments in one dimension.
We note that all computations were carried out with the FEniCSx platform
\cite{BarattaEtal2023,ScroggsEtal2022,BasixJoss,AlnaesEtal2014}, based on version \texttt{0.1} of the
DOLFINx module.

In the experiments, we consider $\{\Omega = (0,1), T=1\}$ and the exact solutions
\begin{equation} \label{eq:exact_solution1}
u_1(t,x) = \exp(-t) \sin(\pi x),
\qquad
h_1(t) = \exp(-t),
\end{equation}
and
\begin{equation} \label{eq:exact_solution2}
u_2(t,x) = \cos(2\pi t)\sin(\pi x),
\qquad
h_2(t) = \sin(2\pi t).
\end{equation}
The spatial domain $\Omega = (0,1)$ is discretised by a uniform mesh consisting of $400$ subintervals. The  finite element space is chosen as the space of continuous, piecewise linear functions corresponding to first-order Lagrange elements.

Next, we define the weight function $\omega$ used in the experiments. 
Let $y\in(0,1)$, $\delta>0$ and $\bar{\varepsilon}\in(0,\delta)$.
We define the weight function $\omega:(0,1)\to\mathbb{R}$ by
\begin{equation} \label{eq:def_mollifier}
\omega(x)
=
\begin{cases}
	1,
	& |x-y|\le \delta-\bar{\varepsilon}, \\[0.3em]
	\dfrac12\!\left(
	1+\cos\!\Big(
	\dfrac{\pi(|x-y|-\delta+\bar{\varepsilon})}{2\bar{\varepsilon}}
	\Big)
	\right),
	& \delta-\bar{\varepsilon} < |x-y| < \delta+\bar{\varepsilon}, \\[0.6em]
	0,
	& |x-y|\ge \delta+\bar{\varepsilon} .
\end{cases}
\end{equation}
The function $\omega$ is nonnegative, compactly supported in
$(y-\delta-\bar{\varepsilon},y+\delta+\bar{\varepsilon})$, and belongs to
$\omega\in \Cont^\infty_0(0,1)$.
Moreover, $\omega\equiv1$ in a neighborhood of $y$ and vanishes smoothly
at the boundary of its support.
The shape of the weight function $\omega$ is illustrated in
\Cref{fig:mollifier} for $y=0.5$, $\delta=0.1$ and $\bar{\varepsilon}=0.01$. It is precisely this shape of the weight function that will be used in all numerical experiments.

In all experiments, we take $f=0$, $p(x)= 1 + \sin(\pi x)$, $\rho =1$, $\kappa(t) = t+1$ and $\eta =0.001.$ This choice of $\eta$ makes that the smallness condition in \ref{as:uniqueness} is satisfied as
\[
\vnorma{p}_{\mathcal{X}}^2  \frac{ M^2  \ubar{\eta}_1^2}{4\ubar{\omega}_m^2 \ubar{\eta}_0 \ubar{\rho}_0}  = \frac{\eta \vnorma{\nabla \omega}^2 \vnorma{p}^2}{4 \scal{p}{\omega}^2 \rho} \approx 0.54 <1, 
\]
since $M = \frac{\vnorma{\nabla \omega}}{\rho}.$

\begin{figure}[ht]
\centering
\includegraphics[width=0.7\textwidth]{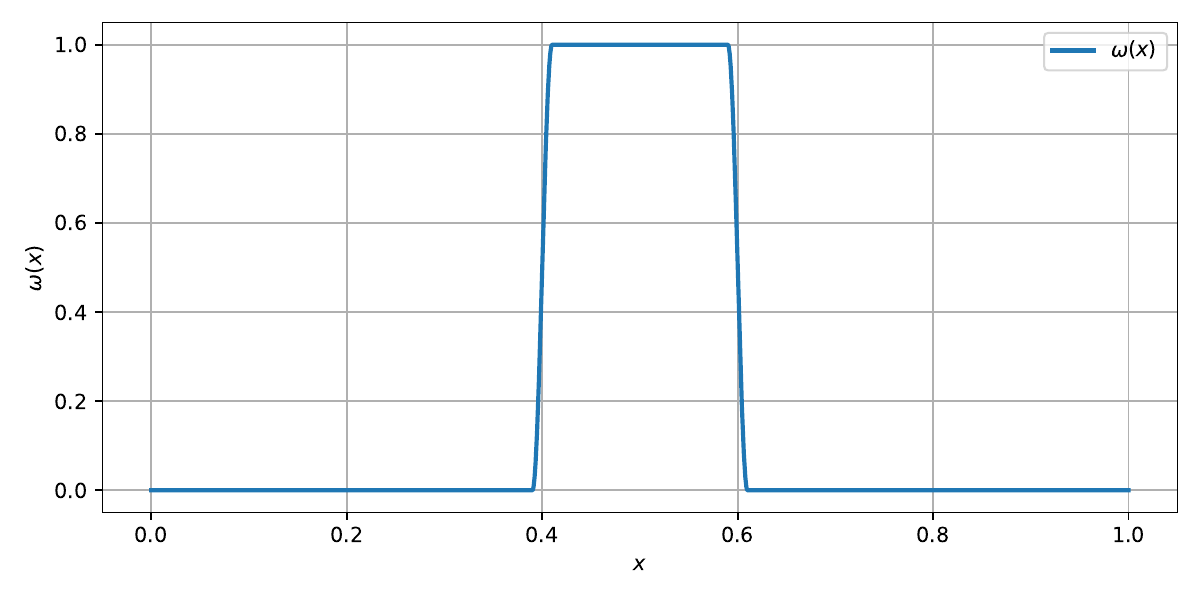}
\caption{Weight function $\omega$ defined in \eqref{eq:def_mollifier}
	for $y=0.5$, $\delta=0.1$ and $\bar{\varepsilon}=0.01$.}
\label{fig:mollifier}
\end{figure}

The reconstruction strategy described in \Cref{algorithm} requires access to the
time derivative $m^\prime(t)$ of the measurement. Since numerical differentiation
of noisy data is inherently unstable, this step is regularised by approximating
the noisy measurements with a low-degree polynomial.
Starting from the exact signal $m(t)$, noisy observations $m^{\epsilon}(t)$ are
generated according to
\[
m^{\epsilon}(t)
=
m(t)\bigl(1+\epsilon \mathcal{R}(t)\bigr),
\qquad
t \in \left\{ \frac{jT}{\widetilde{N}} : j=0,\ldots,\widetilde{N}=100 \right\},
\]
where $\epsilon>0$ denotes the relative noise level (e.g.\ $\epsilon=0.01$
corresponds to $1\%$ noise), and $\mathcal{R}(t)$ is a sequence of independent random
variables uniformly distributed in $[-1,1]$. To ensure reproducibility of the
numerical experiments, the random number generator is initialised with a fixed
seed $\texttt{np.random.seed}(\texttt{exp}-1)$, where $\texttt{exp}$ denotes the
experiment index (so either 1 or 2).

For each noise level $\epsilon \in \{0.001,0.005,0.01,0.03,0.05\}$, the noisy
data $m^\epsilon(t)$ are approximated by polynomials $p^{\epsilon}_{\tilde{d}}(t)$
of degree $\tilde{d} \le 10$, computed via a linear least-squares fit using
\texttt{np.polyfit}. The polynomial degree is selected based on the relative
reduction of the discrete $\ell^2$-error when increasing the degree, defined by
\begin{equation}\label{eq:definition_improvememt}
r_{\mathrm{im}}(\tilde{d})
=
100 \times
\frac{\|p^\epsilon_{\tilde{d}-1}-m^\epsilon\|_{\ell^2}
	-
	\|p^\epsilon_{\tilde{d}}-m^\epsilon\|_{\ell^2}}
{\|p^\epsilon_{\tilde{d}-1}-m^\epsilon\|_{\ell^2}},
\end{equation}
where
\[
\|f\|_{\ell^2}
=
\left(
\frac{1}{\widetilde{N}+1}
\sum_{i=0}^{\widetilde{N}} |f(t_i)|^2
\right)^{1/2}.
\]
The values of $r_{\mathrm{im}}(\tilde{d})$ corresponding to the exact solutions
\eqref{eq:exact_solution1} and \eqref{eq:exact_solution2} are reported in
\cite[Tables~1 and~2]{VanBockstal2025}. Based on this analysis, we select a
polynomial of degree $\tilde{d}=3$ for \eqref{eq:exact_solution1} and
$\tilde{d}=8$ for \eqref{eq:exact_solution2} for each noise level $\epsilon$. Consequently, whenever $\epsilon\neq 0$, the derivative $m^\prime(t_i)$ used in the
reconstruction step \eqref{disc:hi-1} is approximated by
\[
m^\prime(t_i)
\approx
\begin{cases}
\bigl(p^{\epsilon}_{3}\bigr)'(t_i),
& \text{for the experiment corresponding to \eqref{eq:exact_solution1}},\\[0.4em]
\bigl(p^{\epsilon}_{8}\bigr)'(t_i),
& \text{for the experiment corresponding to \eqref{eq:exact_solution2}},
\end{cases}
\qquad i=1,\ldots,n.
\]

\subsection{Reformulation discrete problem}
\label{subsec:reformulation_discrete_problem}

In this section, we present the approach we followed for solving the discrete inverse problem \eqref{equiv:var_for_inv_disc_prob} at a fixed time level $t_i$,
which reads as
\begin{equation}\label{equiv:var_for_inv_disc_prob2}
b_i(u_i,\varphi) = g_i(\varphi),
\qquad \forall \varphi \in \hko{1}.
\end{equation}
First, we define the linear functional
\begin{equation}\label{eq:B_i_def}
B_i(u)
:= \scal{\eta_i \nabla u}{\nabla\!\left(\frac{\omega}{\rho_i}\right)},
\qquad u \in \hko{1}. 
\end{equation}
Then, the bilinear form $b_i(\cdot,\cdot)$ appearing in
\eqref{equiv:var_for_inv_disc_prob2} can be written as
\begin{equation}\label{eq:b_rank_one}
b_i(u,\varphi)
=
a_i(u,\varphi)
-
\frac{1}{\tau\ubar{\omega}_i}
B_i(u)\scal{p_i}{\varphi},
\qquad u,\varphi \in \hko{1}, 
\end{equation}
where $a_i$ is defined in \eqref{eq:bil_form_a}. Note that $a_i$ is continuous and coercive on $\hko{1}$, see \Cref{thm:dp:discrete}. We denote the associated operator by $A_i:\hko{1}\to {\hko{1}}^\ast$. 
Consequently, problem \eqref{equiv:var_for_inv_disc_prob2} is  equivalent to
the operator equation
\begin{equation}\label{eq:abstract_rank_one}
\text{Find } u_i \in \hko{1} \text{ such that }
\qquad
\bigl(A_i - \alpha_i p_i \otimes B_i\bigr) u_i = g_i,
\end{equation}
where
\[
\alpha_i := \frac{1}{\tau\ubar{\omega}_i},
\qquad
(p_i \otimes B_i)(u) := B_i(u)p_i.
\]
Note that the unperturbed problem
\begin{equation}\label{eq:u0_i_def}
a_i(u_i^{(0)},\varphi) = g_i(\varphi),
\qquad \forall \varphi \in \hko{1},
\end{equation}
admits a unique solution $u_i^{(0)}\in \hko{1}$.
Introducing the auxiliary function $z_i\in \hko{1}$ as the unique solution of
\begin{equation}\label{eq:z_i_def}
a_i(z_i,\varphi) = \scal{p_i}{\varphi},
\qquad \forall \varphi \in \hko{1},
\end{equation}
we can search the solution $u_i$ of the  perturbed problem
\eqref{eq:abstract_rank_one} in the form
\begin{equation}\label{eq:ansatz_i}
u_i = u_i^{(0)} + c_i z_i,
\end{equation}
where $c_i\in\RR$ is a scalar coefficient to be determined.
Substituting \eqref{eq:ansatz_i} into \eqref{eq:abstract_rank_one} yields
\[
A_i(u_i^{(0)} + c_i z_i)
-
\alpha_i B_i(u_i^{(0)} + c_i z_i) p_i
=
g_i.
\]
Using the defining relations \eqref{eq:u0_i_def} and \eqref{eq:z_i_def},
this expression reduces to
\[
g_i
+ c_i p_i
-
\alpha_i\bigl(B_i(u_i^{(0)}) + c_i B_i(z_i)\bigr)p_i
=
g_i.
\]
and thus
\[
\bigl[
c_i
-
\alpha_i B_i(u_i^{(0)})
-
\alpha_i c_i B_i(z_i)
\bigr] p_i
= 0.
\]
Since $p_i\neq 0$, the scalar coefficient must vanish, which leads to
\begin{equation}\label{eq:ci_formula}
c_i
=
\frac{\alpha_i B_i(u_i^{(0)})}
{1 - \alpha_i B_i(z_i)},
\end{equation}
provided that 
\begin{equation}\label{eq:nondegeneracy}
1 - \alpha_i B_i(z_i) \neq 0.
\end{equation}
Combining \eqref{eq:ansatz_i} and \eqref{eq:ci_formula}, the solution of
\eqref{equiv:var_for_inv_disc_prob} admits the representation
\begin{equation}\label{eq:solution_representation}
u_i
=
u_i^{(0)}
+
\frac{\alpha_i B_i(u_i^{(0)})}
{1 - \alpha_i B_i(z_i)}
z_i,
\end{equation}
where $u_i^{(0)}$ and $z_i$ are defined by
\eqref{eq:u0_i_def}–\eqref{eq:z_i_def}. Afterwards, we can derive $h_i$ from \eqref{disc:hi-1}. This strategy is summarised in \Cref{algorithm}. In the following remark, we show that condition \eqref{eq:nondegeneracy} is satisfied.

\begin{remark}
Condition \eqref{eq:nondegeneracy} is ensured by the smallness assumption \ref{as:uniqueness} for any $i=1,\ldots,n$.
By definition, we have
\[
\alpha_i B_i(z_i)
=
\frac{1}{\tau\ubar{\omega}_i}
\scal{\eta_i\nabla z_i}{\nabla\!\left(\frac{\omega}{\rho_i}\right)}.
\]
Choosing $\varphi=z_i$ in \eqref{eq:z_i_def} yields (see \Cref{thm:dp:discrete}) that
\[
\frac{\ubar{\rho}_0}{\tau} \vnorma{z_i}^2  +\frac{\ubar{\eta}_0}{\tau}\vnorma{\nabla z_i}^2
\le \scal{p_i}{z_i}. 
\]
For the right-hand side, the Cauchy-Schwarz
and Young inequalities give
\[
(p_i,z_i)
\le
\frac{1}{2\ubar{\rho}_0}\|p_i\|^2
+
\frac{\ubar{\rho}_0}{2}\|z_i\|^2.
\]
Hence, as $\tau < 1$, we have
\[
\frac{\ubar{\eta}_0}{\tau}\vnorma{\nabla z_i}^2 \le \frac{1}{2\ubar{\rho}_0}\|p_i\|^2 
\]
and thus
\begin{equation*}\label{eq:gradzi_bound}
	\|\nabla z_i\|
	\le
	\sqrt{\frac{\tau}{2\ubar{\rho}_0\ubar{\eta}_0} }\|p_i\| \le  \frac{\tau}{2 \sqrt{\ubar{\rho}_0\ubar{\eta}_0}}  \|p\|_{\mathcal X}.
\end{equation*}
Therefore,  using the Cauchy-Schwarz inequality, \ref{as:p} and \ref{as:uniqueness}, we obtain
\[
\abs{\alpha_i B_i(z_i)} \le \frac{1}{\tau\ubar{\omega}_m} 
\ubar{\eta}_1
\|\nabla z_i\|
\left\|\nabla\!\left(\frac{\omega}{\rho_i}\right)\right\| \le 
\frac{\ubar{\eta}_1 M \|p\|_{\mathcal X}}{2\sqrt{\ubar{\rho}_0\ubar{\eta}_0} \ubar{\omega}_m} < \sqrt{\zeta} < 1.
\]
\end{remark}

\begin{algorithm}[htbp]
\caption{Time-stepping algorithm for the discrete inverse problem}
\label{algorithm}
\begin{algorithmic}[1]
	\Require initial state $u_0=\tilde u_0$, time step $\tau$, final time $T$, measurement $m(t)$
	\Ensure discrete solution $\{u_i\}_{i=0}^{n}$ and reconstructed source $\{h_i\}_{i=1}^{n}$
	
	\For{$i = 1$ to $n$}
	\State {\bf Compute } $m_i'$:
	\[
	m_i^\prime =
	\begin{cases}
		m^\prime(t_i), & \epsilon = 0,\\
		{(p^{\epsilon}_{\tilde{d}})}^{\prime}(t_i), & \epsilon \neq 0.
	\end{cases}
	\]
	
	\State {\bf Assemble } the coercive bilinear form
	\[
	a_i(u,\varphi)
	=
	\frac{\rho}{\tau}(u,\varphi)
	+
	\frac{\eta}{\tau}(\nabla u,\nabla\varphi)
	+
	(\kappa_i\nabla u,\nabla\varphi).
	\]
	
	\State {\bf Assemble } the linear functional 
	$g_i(\cdot)$, using $u_{i-1}$ and $m_i^\prime$.
	
	\State {\bf Solve } the unperturbed problem:
	\[
	a_i(u_i^{(0)},\varphi) = g_i(\varphi),
	\qquad \forall \varphi \in \hko{1}.
	\]
	
	\State {\bf Solve } the auxiliary problem:
	\[
	a_i(z_i,\varphi) = (p_i,\varphi),
	\qquad \forall \varphi \in \hko{1}.
	\]
	
	\State {\bf Compute } the scalar quantities
	\[
	B_i(u_i^{(0)}) = \scal{\eta_i\nabla u_i^{(0)}}{\nabla(\omega/\rho_i)},
	\qquad
	B_i(z_i) = \scal{\eta_i\nabla z_i}{\nabla(\omega/\rho_i)}.
	\]
	
	\State {\bf Compute } the coefficient
	\[
	c_i = \frac{\alpha_i B_i(u_i^{(0)})}{1-\alpha_i B_i(z_i)},
	\qquad
	\alpha_i = \frac{1}{\tau\bar\omega_i}.
	\]
	
	\State {\bf Recover } the solution
	\[
	u_i = u_i^{(0)} + c_i z_i.
	\]
	
	\State {\bf Compute } $h_i$ explicitly from
	\eqref{disc:hi-1} using $u_i$ and $u_{i-1}$.
	
	\EndFor
\end{algorithmic}
\end{algorithm}

\subsection{Convergence rate for exact data}
\label{subsec:noisy_free}

In this subsection, we investigate the convergence rate for exact data. We first show the convergence result. 

\begin{theorem}[Discrete-time error estimate]\label{thm:IP_disc_error}
Let Assumptions~\ref{as:DP:rho}-\ref{as:uniqueness} be satisfied.
Assume additionally that
\[
\rho \in \ckIX{2}{\lp{\infty}},
\qquad
m\in \Cont^2([0,T]),
\qquad
u\in \ckIX{2}{\hko{1}}.
\]
Let $\tau=T/n$, $t_i=i\tau$, and let $(u_i,h_i)_{i=1}^n$ be the time-discrete solutions
generated by \eqref{eq:disc_inv_prob}-\eqref{disc:hi-1} (with exact data, i.e. 
$m_i^\prime=m^\prime(t_i)$.).
Then there exist constants $C_u>0$ and $\tau_0>0$, independent of $\tau$, such that
\[
E_{\max}^u(\tau):=\max_{1\le i\le n}\vnorma{u(t_i)-u_i}_{\hk{1}}
\le C_u\tau,
\]
for all $\tau \le \tau_0.$
Moreover, then there exists $C_h>0$, independent of $\tau$, such that
\[
E_{\ell^2}^h(\tau):= \left(\sum_{i=1}^n \abs{h(t_i)-h_i}^2 \tau\right)^{\half} 
\le C_h\tau.
\]
\end{theorem}

\begin{proof}
We split the proof in multiple steps to increase the readability. \\
{Step 1: Notations and residuals.}
For $i=0,\dots,n$, we set
\[
e_{u_i}:=u(t_i)-u_i\in \hko{1},
\qquad
e_{h_i}:=h(t_i)-h_i\in\RR,
\qquad  
\]
Note that $e_{u_0}=0.$
We also remind the notations
\[
\delta v_i:=\frac{v_i-v_{i-1}}{\tau},
\quad
\delta(\rho_i v_i):=\frac{\rho_i v_i-\rho_{i-1}v_{i-1}}{\tau},
\quad
\rho_i:=\rho(t_i),\ \eta_i:=\eta(t_i),\ \kappa_i:=\kappa(t_i),\ \ubar{\omega}_i:=\ubar{\omega}(t_i).
\]
For the continuous solution, we define
\[
\delta u(t_i):=\frac{u(t_i)-u(t_{i-1})}{\tau}.
\]
Since $u\in \ckIX{2}{\hko{1}}$, we define the (Backward Euler) consistency residual
\begin{equation}\label{eq:ri_def_new}
	r_i:=\delta u(t_i)-\pdt u(t_i)\in \hko{1}.
\end{equation}
By the
Taylor identity, we get
\begin{equation}\label{eq:BE_consistency_u}
	\frac{u(t_i)-u(t_{i-1})}{\tau}
	=\partial_t u(t_i) - \frac{\tau}{2}\partial_{tt}u(\xi_i),
	\qquad \text{ for some } \xi_i\in(t_{i-1},t_i),
\end{equation}
hence
\begin{equation}\label{eq:BE_consistency_u_bound}
	\vnorma{r_i}_{\hk{1}}
	\le C\tau,
\end{equation}
where $C$ only depends  on $\vnorma{\pdtt u}_{\lpkIX{\infty}{\hk{1}}}$.
Likewise, since $\rho\in \ckIX{2}{\lp{\infty}}$ and $u\in \ckIX{2}{\lp{2}}$,
we can define the product residual
\begin{equation}\label{eq:rhor_def_new}
	r_i^\rho:=\delta(\rho(t_i)u(t_i))-\pdt(\rho u)(t_i)\in \lp{2},
	\qquad
	\vnorma{r_i^\rho}\le C\tau,
\end{equation}
with $C$ depending on $\vnorma{\rho}_{\ckIX{2}{\lp{\infty}}}$ and $\vnorma{u}_{\ckIX{2}{\lp{2}}}.$\\
{Step 2: Weak formulation for the exact solution at time $t_i$.}
Evaluating the continuous variational formulation \eqref{eq:var_for} at time $t=t_i$ gives
for all $\varphi\in \hko{1}$ that
\[
\scal{\pdt(\rho u)(t_i)}{\varphi}
+\scal{\eta_i\nabla\pdt u(t_i)}{\nabla\varphi}
+\scal{\kappa_i\nabla u(t_i)}{\nabla\varphi}
=
h(t_i)\scal{p_i}{\varphi}+\scal{f(u(t_i))}{\varphi}.
\]
Using \eqref{eq:ri_def_new}-\eqref{eq:rhor_def_new},
we obtain the time-discrete identity
\begin{equation}\label{eq:cont_disc_identity_new}
	\scal{\delta(\rho(t_i)u(t_i))}{\varphi}
	+\scal{\eta_i\nabla\delta u(t_i)}{\nabla\varphi}
	+\scal{\kappa_i\nabla u(t_i)}{\nabla\varphi}
	=
	h(t_i)\scal{p_i}{\varphi}+\scal{f(u(t_i))}{\varphi}
	+\mathcal{R}_i(\varphi),
\end{equation}
where the residual functional $\mathcal{R}_i$ is defined by
\begin{equation}\label{eq:Resi_def_new}
	\mathcal{R}_i(\varphi):=\scal{r_i^\rho}{\varphi}+\scal{\eta_i\nabla r_i}{\nabla\varphi}.
\end{equation}
By the Cauchy-Schwarz inequality and \eqref{eq:BE_consistency_u_bound}-\eqref{eq:rhor_def_new}, we have
\[
\abs{\mathcal{R}_i(\varphi)}\le C\tau\vnorma{\varphi}_{\hk{1}}.
\]
Applying the Friedrichs' inequality implies 
\begin{equation}\label{eq:Resi_bound_new}
	\abs{\mathcal{R}_i(\varphi)}\le C\tau\vnorma{\nabla \varphi}.
\end{equation}

{Step 3: Discrete error equation.}
Now, we subtract the discrete scheme \eqref{eq:disc_inv_prob} from \eqref{eq:cont_disc_identity_new} to  obtain for all $\varphi\in\hko{1}$ that
\begin{equation}\label{eq:error_eq_new}
	\scal{\delta(\rho_i e_{u_i})}{\varphi}
	+\scal{\eta_i\nabla\delta e_{u_i}}{\nabla\varphi}
	+\scal{\kappa_i\nabla e_{u_i}}{\nabla\varphi}
	=
	e_{h_i}\scal{p_i}{\varphi}
	+\scal{f(u(t_i))-f(u_{i-1})}{\varphi}
	+\mathcal{R}_i(\varphi).
\end{equation}

{Step 4: Derivation estimate $e_{h_i}$}
We now derive an explicit formula for $e_{h_i}=h(t_i)-h_i$ by comparing
the continuous identity \eqref{eq:expression_h} with the discrete definition
\eqref{disc:hi-1}. Evaluating \eqref{eq:expression_h} at $t=t_i$ gives
\begin{multline}\label{eq:h_cont_ti_full}
	h(t_i)
	=
	\frac{1}{\ubar{\omega}_i}
	\Bigg[
	m^\prime(t_i)
	+
	\scal{u(t_i)}{\frac{\omega(\partial_t\rho)(t_i)}{\rho_i}}
	+
	\scal{\eta_i\nabla\partial_t u(t_i)}{\nabla\!\Big(\frac{\omega}{\rho_i}\Big)}\\
	+
	\scal{\kappa_i\nabla u(t_i)}{\nabla\!\Big(\frac{\omega}{\rho_i}\Big)}
	-
	\scal{f(u(t_i))}{\frac{\omega}{\rho_i}}
	\Bigg].
\end{multline}
Subtracting \eqref{disc:hi-1} from \eqref{eq:h_cont_ti_full} gives
\begin{equation}\label{eq:eh_expanded}
	e_{h_i}
	=
	\frac{1}{\ubar{\omega}_i}
	\Big(
	\mathcal{H}_{1,i}+\mathcal{H}_{2,i}+\mathcal{H}_{3,i}+\mathcal{H}_{4,i}
	\Big),
\end{equation}
where
\begin{align*}
	\mathcal{H}_{1,i}
	&:=
	\scal{u(t_i)}{\frac{\omega(\partial_t\rho)(t_i)}{\rho_i}}
	-
	\scal{u_{i-1}}{\frac{\omega\delta\rho_i}{\rho_i}},
	\\
	\mathcal{H}_{2,i}
	&:=
	\scal{\eta_i\nabla\partial_t u(t_i)}{\nabla\!\Big(\frac{\omega}{\rho_i}\Big)}
	-
	\scal{\eta_i\nabla\delta u_i}{\nabla\!\Big(\frac{\omega}{\rho_i}\Big)},
	\\
	\mathcal{H}_{3,i}
	&:=
	\scal{\kappa_i\nabla u(t_i)}{\nabla\!\Big(\frac{\omega}{\rho_i}\Big)}
	-
	\scal{\kappa_i\nabla u_{i-1}}{\nabla\!\Big(\frac{\omega}{\rho_i}\Big)},
	\\
	\mathcal{H}_{4,i}
	&:=
	-\scal{f(u(t_i))}{\frac{\omega}{\rho_i}}
	+\scal{f(u_{i-1})}{\frac{\omega}{\rho_i}}.
\end{align*}
We now bound each $\mathcal{H}_{k,i}$ explicitly. In the term $\mathcal{H}_{1,i},$
we add and subtract $u(t_{i-1})$ and $(\partial_t\rho)(t_i)$ to obtain
\begin{multline*}
	\mathcal{H}_{1,i}
	=
	\underbrace{\scal{u(t_i)-u(t_{i-1})}{\frac{\omega(\partial_t\rho)(t_i)}{\rho_i}}}_{\mathcal{H}_{1,1,i}}
	+
	\underbrace{\scal{u(t_{i-1})-u_{i-1}}{\frac{\omega(\partial_t\rho)(t_i)}{\rho_i}}}_{\mathcal{H}_{1,2,i}} \\
	+
	\underbrace{\scal{u_{i-1}}{\frac{\omega\big((\partial_t\rho)(t_i)-\delta\rho_i\big)}{\rho_i}}}_{\mathcal{H}_{1,3,i}}.
\end{multline*}
Note that since $\rho\in \ckIX{2}{\lp{\infty}}$, it holds that 
\begin{equation*}\label{eq:BE_consistency_rho}
	\delta\rho_i
	=\partial_t\rho(t_i) - \frac{\tau}{2}\partial_{tt}\rho(\theta_i),
	\qquad \theta_i\in(t_{i-1},t_i),
\end{equation*}
and therefore
\begin{equation}\label{eq:BE_consistency_rho_bound}
	\|\partial_t\rho(t_i)-\delta\rho_i\|_{\lp{\infty}}
	\le \frac{\tau}{2}\|\partial_{tt}\rho\|_{\lpkIX{\infty}{\lp{\infty}}}.
\end{equation}
Moreover, since $u\in \ckIX{1}{\hk{1}}$, we have \begin{equation}\label{eq:convergence_u}
	\vnorma{u(t_i)-u(t_{i-1})}_{\hk{1}}\le C\tau.
\end{equation}
Then, by \eqref{eq:convergence_u} and \eqref{eq:BE_consistency_rho_bound}, we get
\[
|\mathcal{H}_{1,1,i}|
\le
\|u(t_i)-u(t_{i-1})\|
\left\|\frac{\omega(\partial_t\rho)(t_i)}{\rho_i}\right\|_{\lp{\infty}}
\le C\tau,
\]
\[
|\mathcal{H}_{1,2,i}|
\le
\|e_{u_{i-1}}\|
\left\|\frac{\omega(\partial_t\rho)(t_i)}{\rho_i}\right\|_{\lp{\infty}}
\le C\|e_{u_{i-1}}\|,
\]
\[
|\mathcal{H}_{1,3,i}|
\le
\|u_{i-1}\|
\left\|\frac{\omega\big((\partial_t\rho)(t_i)-\delta\rho_i\big)}{\rho_i}\right\|_{\lp{\infty}}
\le C\tau.
\]
Hence, we have 
\begin{equation}\label{eq:H1_bound}
	|\mathcal{H}_{1,i}|
	\le
	C\tau + C\|e_{u_{i-1}}\|.
\end{equation}
We rewrite the term $\mathcal{H}_{2,i}$ as
\[
\mathcal{H}_{2,i}
=
-\scal{\eta_i\nabla r_i}{\nabla(\omega/\rho_i)}
+
\scal{\eta_i\nabla\delta e_{u_i}}{\nabla(\omega/\rho_i)}.
\]
Using \eqref{eq:BE_consistency_u_bound} and Cauchy-Schwarz, we obtain
\[
\abs{\scal{\eta_i\nabla r_i}{\nabla(\omega/\rho_i)}}
\le
\|\eta_i\|_{\lp{\infty}}\|r_i\|_{\hk{1}}\Big\|\nabla\Big(\frac{\omega}{\rho_i}\Big)\Big\|
\le C\tau,
\]
and
\[
\abs{\scal{\eta_i\nabla\delta e_{u_i}}{\nabla(\omega/\rho_i)}}
\le
\|\eta_i\|_{\lp{\infty}}\|\nabla \delta e_{u_i}\|\Big\|\nabla\Big(\frac{\omega}{\rho_i}\Big)\Big\|
\le
\ubar\eta_1 M\|\nabla \delta e_{u_i}\|.
\]
Therefore, we get
\begin{equation}\label{eq:H2_bound}
	|\mathcal{H}_{2,i}|
	\le
	C\tau + \ubar\eta_1 M\|\nabla \delta e_{u_i}\|.
\end{equation}
For the term $\mathcal{H}_{3,i}$, we write
\begin{equation} \label{eq:trick}
	u(t_i)-u_{i-1}
	=
	\big(u(t_i)-u(t_{i-1})\big)+e_{u_{i-1}},
\end{equation}
hence
\[
|\mathcal{H}_{3,i}|
\le
\ubar\kappa_1\|\nabla(u(t_i)-u(t_{i-1}))\|\Big\|\nabla\Big(\frac{\omega}{\rho_i}\Big)\Big\|
+
\ubar\kappa_1\|\nabla e_{u_{i-1}}\|\Big\|\nabla\Big(\frac{\omega}{\rho_i}\Big)\Big\|.
\]
Using \eqref{eq:convergence_u}, we obtain
\begin{equation}\label{eq:H3_bound}
	|\mathcal{H}_{3,i}|
	\le
	C\tau + C\|\nabla e_{u_{i-1}}\|.
\end{equation}
Finally, by the Lipschitz continuity of $f$, \eqref{eq:trick} and \eqref{eq:convergence_u}, we get 
\begin{equation}\label{eq:H4_bound}
	\abs{\mathcal{H}_{4,i}}
	\le
	\left\|\frac{\omega}{\rho_i}\right\|
	\|f(u(t_i))-f(u_{i-1})\|
	\le
	C\|u(t_i)-u_{i-1}\|
	\le
	C\tau + C\|e_{u_{i-1}}\|.
\end{equation}
Combining \eqref{eq:eh_expanded} with \eqref{eq:H1_bound}, \eqref{eq:H2_bound},
\eqref{eq:H3_bound}, \eqref{eq:H4_bound}, and using $|\ubar{\omega}_i|\ge \ubar\omega_m$,
we arrive at the  bound
\begin{equation*}
	\abs{e_{h_i}}
	\le
	C\tau
	+
	C\|e_{u_{i-1}}\|_{\hk{1}}
	+
	\frac{\ubar\eta_1 M}{\ubar\omega_m} \|\nabla \delta e_{u_i}\|. 
\end{equation*}
By the Friedrichs' inequality, we can write 
\begin{equation}\label{eq:eh_final_bound}
	\abs{e_{h_i}}
	\le
	C\tau
	+
	C\|\nabla e_{u_{i-1}}\|
	+
	\frac{\ubar\eta_1 M}{\ubar\omega_m} \|\nabla \delta e_{u_i}\|,
	\qquad i=1,\dots,n.
\end{equation}

{Step 5: Derivation error bound $e_{u_i}$.}
We take $\varphi=\delta e_{u_i}\tau$ in \eqref{eq:error_eq_new} and sum up the result for $i=1,\ldots,j$ with $1\le j\le n$. 
We obtain
\begin{multline} \label{eq:error_eq_new2}
	\sum_{i=1}^j  \scal{\delta(\rho_i e_{u_i})}{\delta e_{u_i}} \tau
	+ \sum_{i=1}^j \scal{\eta_i\nabla\delta e_{u_i}}{\nabla \delta e_{u_i}} \tau
	+ \sum_{i=1}^j \scal{\kappa_i\nabla e_{u_i}}{\nabla \delta e_{u_i}} \tau 
	=\\
	\sum_{i=1}^j e_{h_i}\scal{p_i}{\delta e_{u_i}} \tau
	+ \sum_{i=1}^j\scal{f(u(t_i))-f(u_{i-1})}{\delta e_{u_i}} \tau
	+ \sum_{i=1}^j\mathcal{R}_i(\delta e_{u_i})\tau.
\end{multline}
We treat each term on the left-hand side as in \Cref{direct_problem:a_priori_estimate}, replacing $u_i$ by $e_{u_i}$ and using $e_{u_0}=0$.  We obtain
\begin{equation} \label{eq:rho_lower_new}
	\sum_{i=1}^j \scal{\delta (\rho_i e_{u_i})}{\delta e_{u_i}}\tau 
	\ge \left(\ubar{\rho}_0 - {\veps_1}  \right) \sum_{i=1}^j \vnorma{\delta e_{u_i}}^2 \tau - C_{\veps_1} \sum_{i=1}^j \vnorma{\nabla e_{u_i}}^2 \tau, 
\end{equation}
\begin{equation} \label{eq:eta_term_lower}
	\sum_{i=1}^j \scal{\eta_i\nabla\delta e_{u_i}}{\nabla \delta e_{u_i}} \tau \ge  \ubar{\eta}_0  \sum_{i=1}^j \vnorma{\nabla\delta e_{u_i}}^2 \tau,
\end{equation}
and 
\begin{equation} \label{eq:kappa_lower}
	\sum_{i=1}^j  \scal{\kappa_i \nabla e_{u_i}}{\nabla \delta e_{u_i}} \tau
	\geq \frac{\ubar{\kappa}_0}{2}  \vnorma{\nabla e_{u_j}}^2 
	- \frac{\ubar{\kappa}_1^\prime}{2}  \sum_{i=1}^{j-1} \vnorma{\nabla e_{u_i}}^2 \tau  
	+ \frac{\ubar{\kappa}_0}{2} \sum_{i=1}^j \vnorma{\nabla e_{u_i} - \nabla e_{u_{i-1}}}^2.  
\end{equation}
Now, we handle the right-hand side of \eqref{eq:error_eq_new2}. 
It follows from the Lipschitz continuity of $f$ that
\[
\vnorma{f(u(t_i))-f(u_{i-1})}
\le
L_f\vnorma{u(t_i)-u_{i-1}}
\le
L_f\Big(\vnorma{u(t_i)-u(t_{i-1})}+\vnorma{e_{u_{i-1}}}\Big).
\]
Hence, from \eqref{eq:convergence_u}, the Friedrichs' inequality and $e_{u_0}=0$ it follows that 
\begin{align}
	\sum_{i=1}^j \abs{\scal{f(u(t_i))-f(u_{i-1})}{\delta e_{u_i}}} \tau
	&\le
	C \sum_{i=1}^j \Big(\tau+\vnorma{e_{u_{i-1}}}\Big)\vnorma{\delta e_{u_i}} \tau
	\nonumber\\
	&\le
	C_{\veps_2}\tau^2 + C_{\veps_2}  \sum_{i=1}^{j-1} \vnorma{\nabla e_{u_{i}}}^2 \tau + \veps_2 \sum_{i=1}^j \vnorma{\delta e_{u_i}}^2 \tau.
	\label{eq:nonlinear_rhs}
\end{align}
Moreover, using \eqref{eq:eh_final_bound}, we get 
\[
\abs{e_{h_i}\scal{p_i}{\delta e_{u_i}}} \le \vnorma{p}_{\mathcal{X}} \Big(C\tau
+
C\|\nabla e_{u_{i-1}}\|
+
\frac{\ubar\eta_1 M}{\ubar\omega_m}  \|\nabla \delta e_{u_i}\|\Big)\|\delta e_{u_i}\|. 
\]
We handle all terms on the right-hand side by using the $\veps$-Young inequality. 
We obtain 
\[
\vnorma{p}_{\mathcal{X}} C \tau \|\delta e_{u_i}\| \le C_{\veps_3}\tau^2 + \veps_3 \|\delta e_{u_i}\|^2,
\]
\[
\vnorma{p}_{\mathcal{X}} C\|\nabla e_{u_{i-1}}\| \|\delta e_{u_i}\| 
\le C_{\veps_4} \|\nabla e_{u_{i-1}}\|^2  + \veps_4 \|\delta e_{u_i}\|^2,
\]
and
\[
\vnorma{p}_{\mathcal{X}} \frac{\ubar\eta_1 M}{\ubar\omega_m} \|\nabla \delta e_{u_i} \| \|\delta e_{u_i}\|  \le \frac{\vnorma{p}_{\mathcal{X}}^2\ubar{\eta}_1^2 M^2}{\ubar{\omega}^2_m  4 \veps_5} \| \nabla  \delta e_{u_i} \|^2 + \veps_5 \|\delta e_{u_i}\|^2.
\]
Therefore, we obtain (as $e_{u_0}=0$) 
\begin{multline} \label{eq:est_e_h_term}
	\sum_{i=1}^j \abs{e_{h_i}\scal{p_i}{\delta e_{u_i}}} \tau
	\le
	C_{\veps_3}\tau^2  + (\veps_3+\veps_4 + \veps_5) \sum_{i=1}^j  \|\delta e_{u_i}\|^2 \tau \\
	+ C_{\veps_4} \sum_{i=1}^{j-1} \|\nabla e_{u_{i}}\|^2 \tau + \frac{\vnorma{p}_{\mathcal{X}}^2\ubar{\eta}_1^2 M^2}{\ubar{\omega}^2_m  4 \veps_5} \sum_{i=1}^j \| \nabla  \delta e_{u_i} \|^2.
\end{multline}
Additionally, by \eqref{eq:Resi_bound_new}, we get that
\begin{equation}\label{eq:residual_rhs}
	\sum_{i=1}^j \abs{\mathcal{R}_i(\delta e_{u_i})} \tau
	\le C\tau \sum_{i=1}^j \vnorma{\nabla \delta e_{u_i}}\tau
	\le C_{\veps_6}\tau^2 + \veps_6 \sum_{i=1}^j\vnorma{\nabla \delta e_{u_i}}^2 \tau.
\end{equation}
Now, putting $\veps_5 = \ubar{\rho}_0\zeta $ and collecting all estimates \eqref{eq:rho_lower_new}, \eqref{eq:eta_term_lower},
\eqref{eq:kappa_lower}, \eqref{eq:residual_rhs}, \eqref{eq:nonlinear_rhs}, and \eqref{eq:est_e_h_term}, we arrive at
\begin{multline}
	\left(\ubar{\rho}_0(1-\zeta) -  \sum_{i=1}^4 \veps_i  \right) \sum_{i=1}^j \vnorma{\delta e_{u_i}}^2 \tau + \left(  \ubar{\eta}_0 - \veps_6 - \frac{\vnorma{p}_{\mathcal{X}}^2\ubar{\eta}_1^2 M^2}{\ubar{\omega}^2_m  4 \ubar{\rho}_0\zeta} \right) \sum_{i=1}^j \vnorma{\nabla\delta e_{u_i}}^2 \tau \\
	+  \frac{\ubar{\kappa}_0}{2}  \vnorma{\nabla e_{u_j}}^2  
	+ \frac{\ubar{\kappa}_0}{2} \sum_{i=1}^j \vnorma{\nabla e_{u_i} - \nabla e_{u_{i-1}}}^2 \\
	\le \left(C_{\veps_2} + C_{\veps_3} + C_{\veps_6}\right)\tau^2  + \left( \frac{\ubar{\kappa}_1^\prime}{2} + C_{\veps_1}  + C_{\veps_2}  + C_{\veps_4} \right) \sum_{i=1}^{j} \vnorma{\nabla e_{u_i}}^2 \tau.
\end{multline}
Now, we choose $\veps_6
\le
\frac12\left(
\ubar{\eta}_0
-
\frac{\|p\|_{\mathcal X}^2 \ubar{\eta}_1^2 M^2}
{\ubar{\omega}_m^24\ubar{\rho}_0\zeta}
\right)
$. This is possible due to the smallness condition 
\eqref{cond:uniqueness} in \ref{as:uniqueness}. Moreover, we take $\veps_i = \frac{\ubar{\rho}_0(1-\zeta)}{8}$ for $i=1,\ldots,4.$ Therefore, we obtain by the Gr\"onwall lemma the existence of constants  $C>0$ (independent of $\tau$) and $\tau_0>0$ such that 
\begin{equation}\label{eq:error_estimate_eu}
	\sum_{i=1}^j \vnorma{\delta e_{u_i}}_{\hk{1}}^2 \tau  + \vnorma{e_j}_{\hk{1}}^2 + \sum_{i=1}^j \vnorma{\nabla e_{u_i} - \nabla e_{u_{i-1}}}_{\hk{1}}^2 \le C\tau^2,
\end{equation}
for all $\tau \le \tau_0.$

{Step 6: Derivation error bound $e_{h_i}$.} From \eqref{eq:eh_final_bound}, we have that
\begin{equation*}\label{eq:eh_final_bound2}
	\sum_{i=1}^j \abs{e_{h_i}}^2\tau
	\le
	C\tau^2
	+
	C \sum_{i=1}^{j-1}\|\nabla e_{u_{i}}\|^2 \tau
	+
	C \sum_{i=1}^{j} \|\nabla \delta e_{u_i}\|^2 \tau. 
\end{equation*}
From the estimate \eqref{eq:error_estimate_eu}, we finally have the existence of constants  $C>0$ and $\tau_0$ such that 
\[
\sum_{i=1}^j \abs{e_{h_i}}^2\tau \le C \tau^2, 
\]
for all $\tau \le \tau_0.$
\end{proof}

We next illustrate the theoretical convergence rates by applying \Cref{algorithm} to
Experiment~2 (corresponding to the exact solution \eqref{eq:exact_solution2}).
The time step is chosen as $\tau = 2^{-i}$ for $i = 7,\ldots,15$.
The numerical results exhibit the expected first-order convergence obtained in \Cref{thm:IP_disc_error},
namely
\[
E_{\mathrm{max}}^{u_2}(\tau) = \mathcal{O}(\tau),
\qquad
E_{\ell^2}^{h_2}(\tau) = \mathcal{O}(\tau),
\]
as shown in \Cref{Experiment1conv}.
Moreover, for $\tau = 2^{-13}$ the reconstruction error satisfies
$E_{\ell^2}^{h_2} \approx 10^{-2}$, which is sufficiently small to justify
the use of this time step in the subsequent experiments with noisy data.

\begin{figure}[htbp]
\begin{center}
	\subfigure[]{\includegraphics[width=0.75\textwidth,angle=0,height = 0.22\textheight]{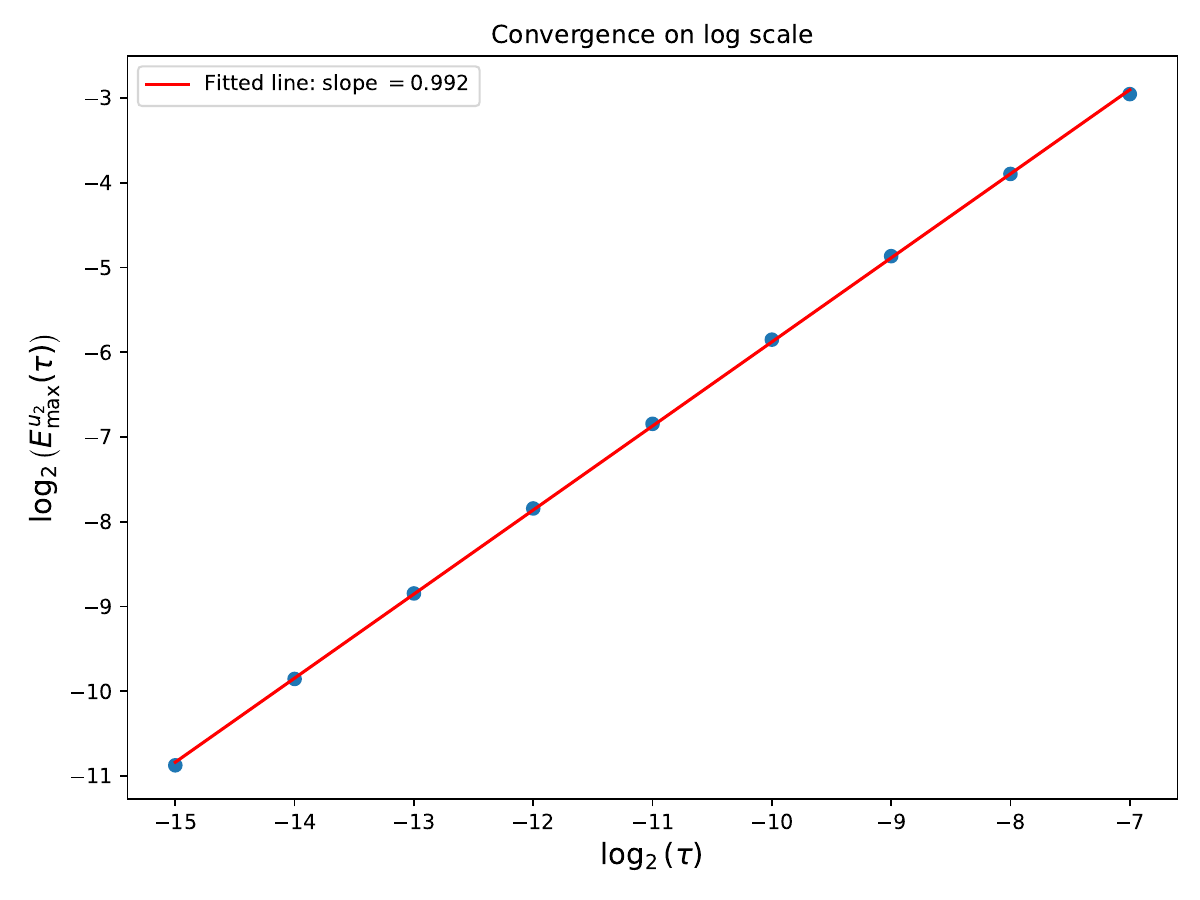}}
	\subfigure[]{\includegraphics[width=0.75\textwidth,angle=0,height = 0.22\textheight]{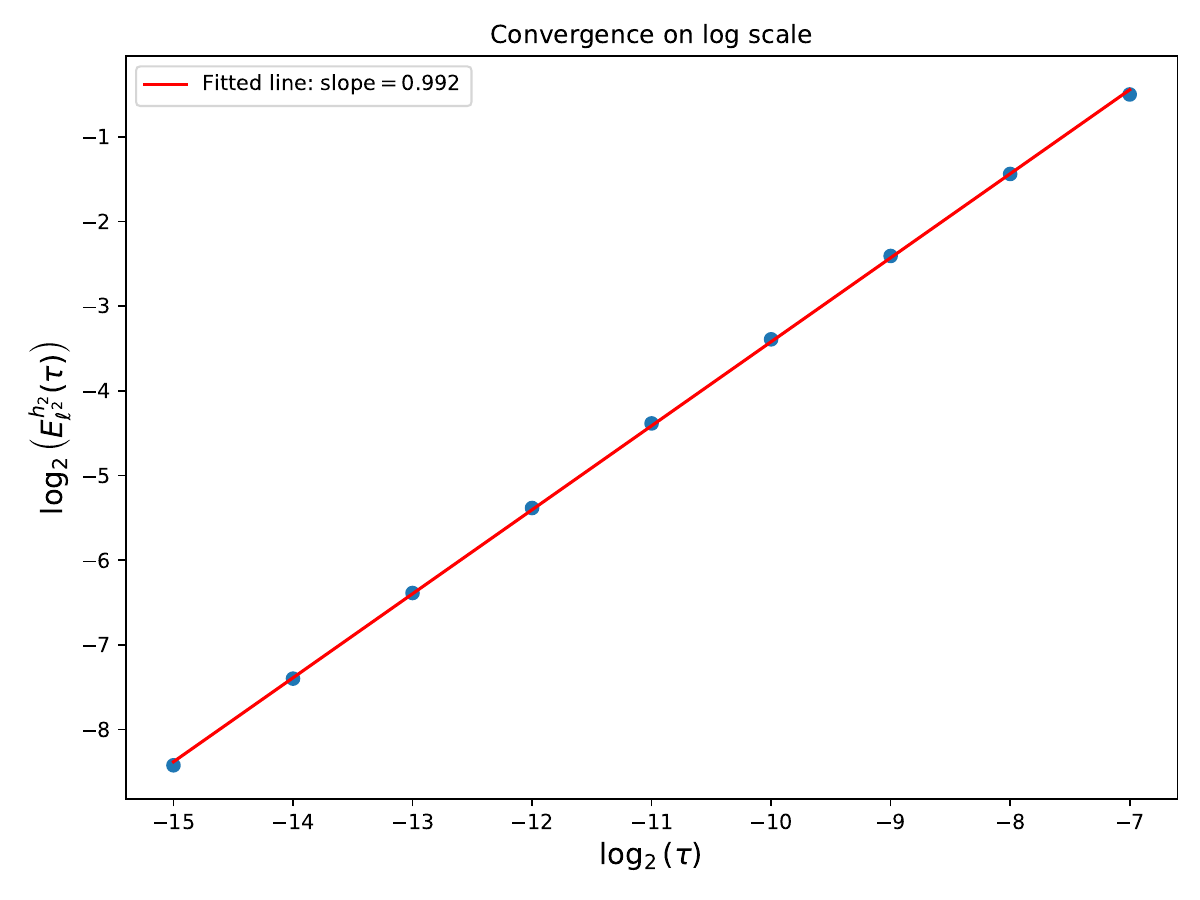}}
\end{center}
\vspace{-0.5cm}
\caption{Experiment 2: (a) rate of convergence of $u_2$; (b) rate of convergence for $h_2$, for noise-free data $m_2$. 
}
\label{Experiment1conv}
\end{figure}

\subsection{Noisy experiments in one dimension}
\label{subsec:1dnoisy}

The results of the first experiment are shown in \Cref{Experiment1,Experiment1u}, and those of Experiment~2 in \Cref{Experiment2,Experiment2u}. 
For both one-dimensional experiments, the figures illustrate that accurate approximations of the exact sources and the solution at the final time are obtained across all considered noise levels 
$\epsilon = \{0.001,0.005,0.01,0.03,0.05\}$ for $\tau = 2^{-13}$. The reconstructions are stable for small noise levels, while the error naturally increases as the noise level increases, highlighting the impact of measurement noise on the inverse problem.

\begin{figure}[htbp]
\begin{center}
	\subfigure[]{\includegraphics[width=0.75\textwidth,angle=0,height = 0.22\textheight]{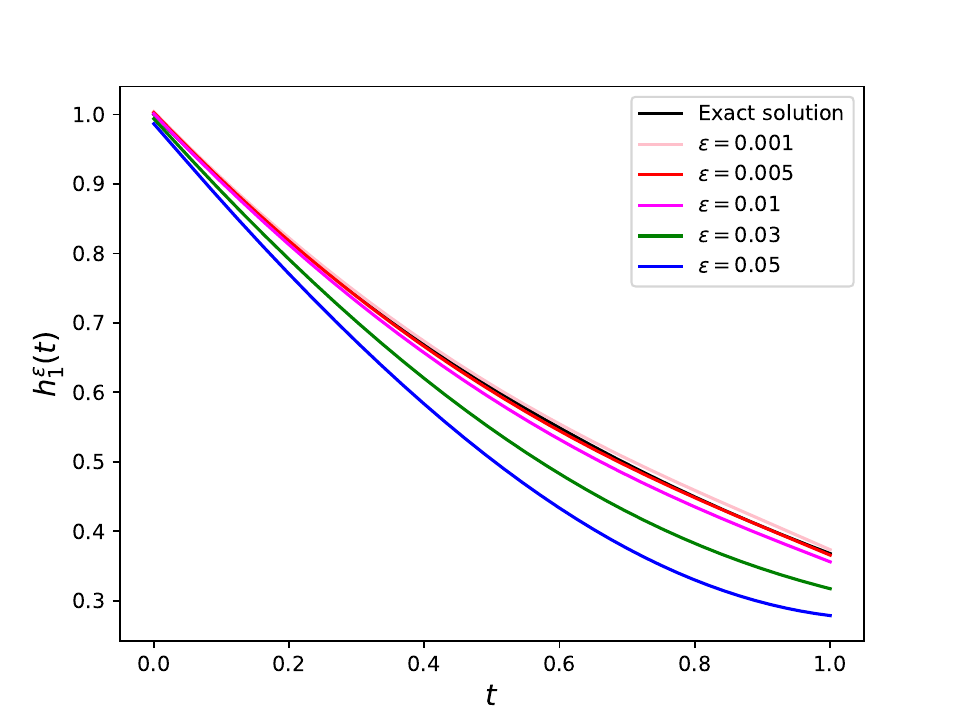}}
	\subfigure[]{\includegraphics[width=0.75\textwidth,angle=0,height = 0.22\textheight]{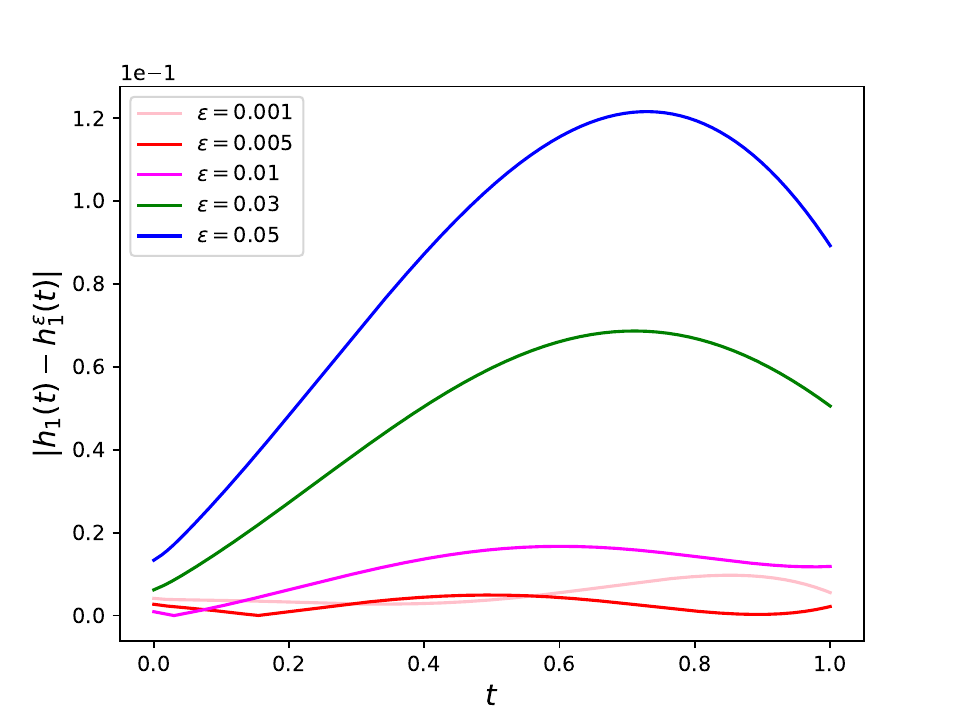}}
\end{center}
\vspace{-0.5cm}
\caption[Unknown time source (1D): Results for Experiment 1]{Experiment 1: (a)~The exact source and its numerical approximation, and (b)~its corresponding absolute error, obtained for various levels of noise.
}
\label{Experiment1}
\end{figure}
\begin{figure}[htbp]
\begin{center}
	\subfigure[]{\includegraphics[width=0.75\textwidth,angle=0,height = 0.22\textheight]{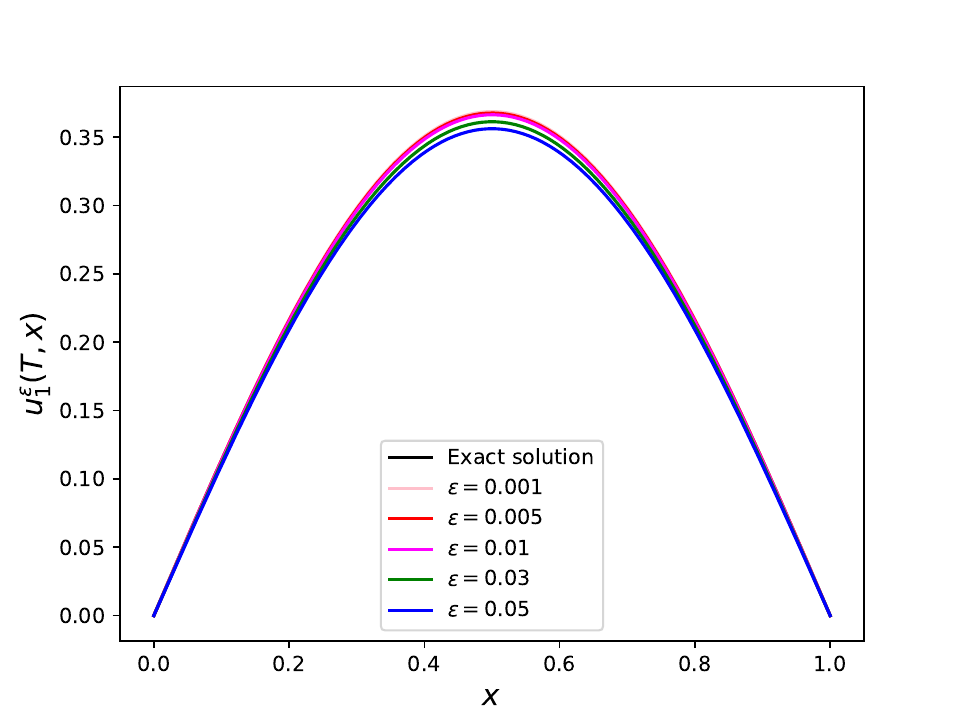}}
	\subfigure[]{\includegraphics[width=0.75\textwidth,angle=0,height = 0.22\textheight]{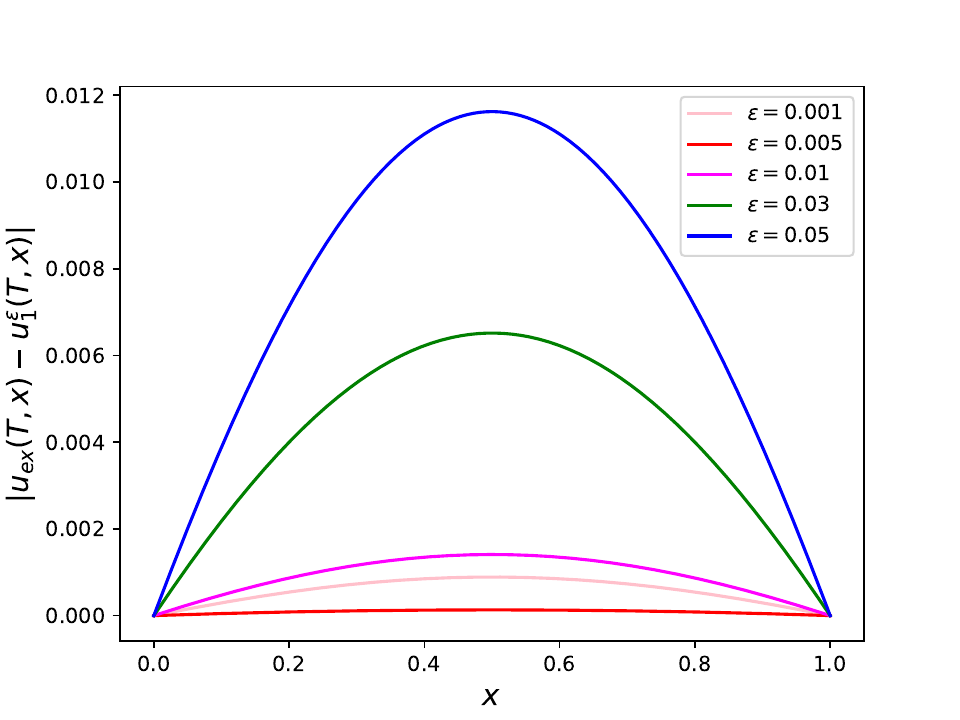}}
\end{center}
\vspace{-0.5cm}
\caption{Experiment 1 (1D): (a)~The exact solution at final time $T=1$ and its numerical approximation, and (b)~its corresponding absolute error, obtained for various levels of noise.}
\label{Experiment1u}
\end{figure}

\begin{figure}[htbp]
\begin{center}
	\subfigure[]{\includegraphics[width=0.75\textwidth,angle=0,height = 0.22\textheight]{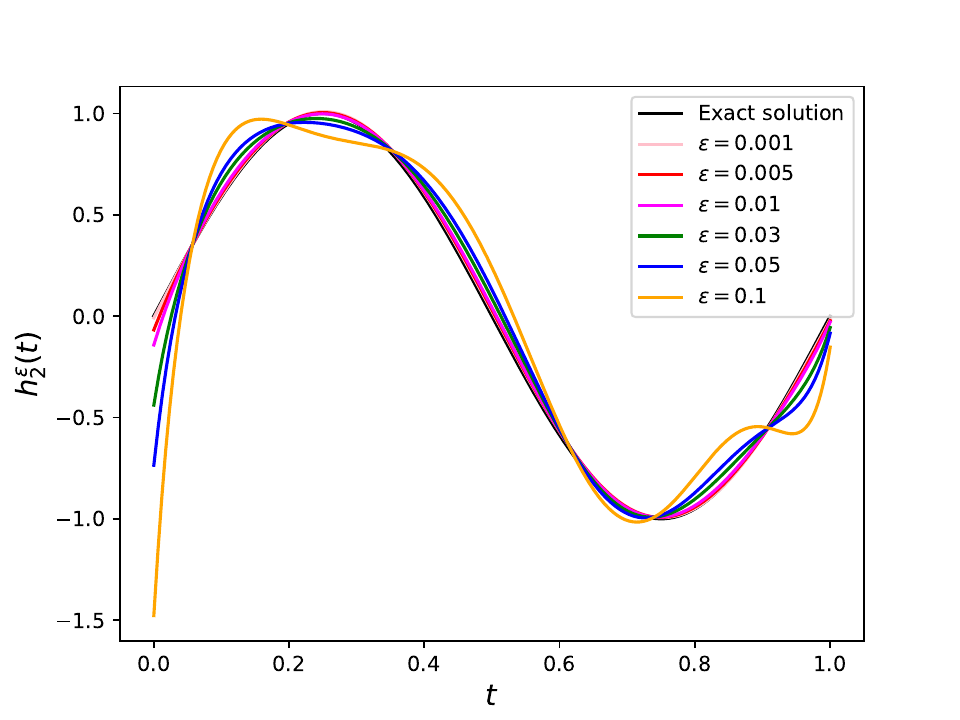}}
	\subfigure[]{\includegraphics[width=0.75\textwidth,angle=0,height = 0.22\textheight]{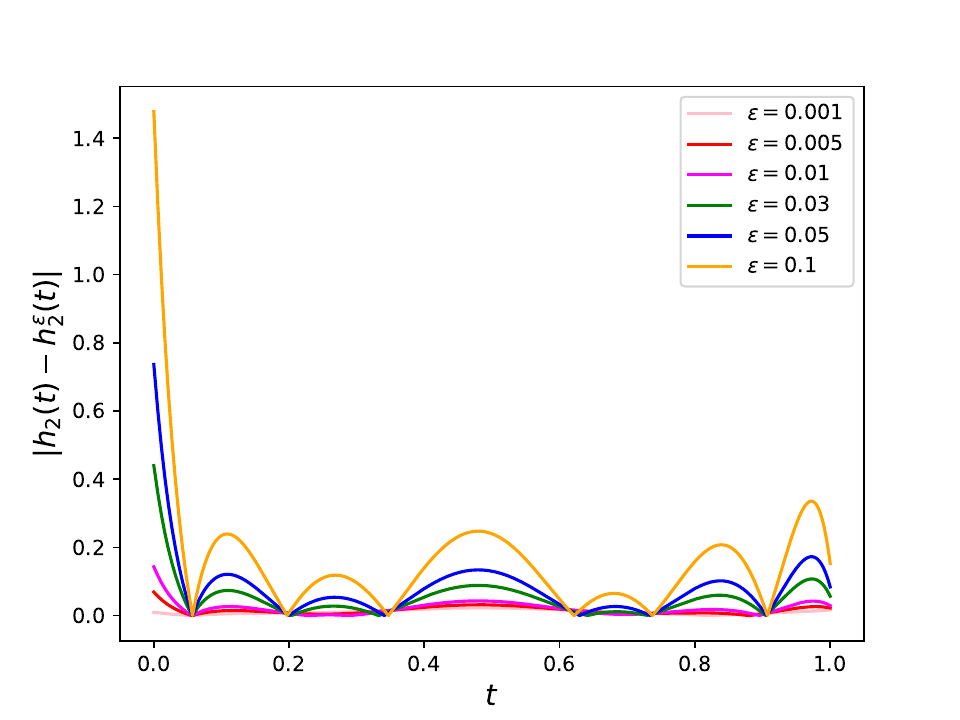}}
\end{center}
\vspace{-0.5cm}
\caption[Unknown time source: Results for Experiment 2]{Experiment 2 (1D): (a)~The exact source and its numerical approximation, and (b)~its corresponding absolute error, obtained for various levels of noise.
}
\label{Experiment2}
\end{figure}

\begin{figure}[htbp]
\begin{center}
	\subfigure[]{\includegraphics[width=0.75\textwidth,angle=0,height = 0.22\textheight]{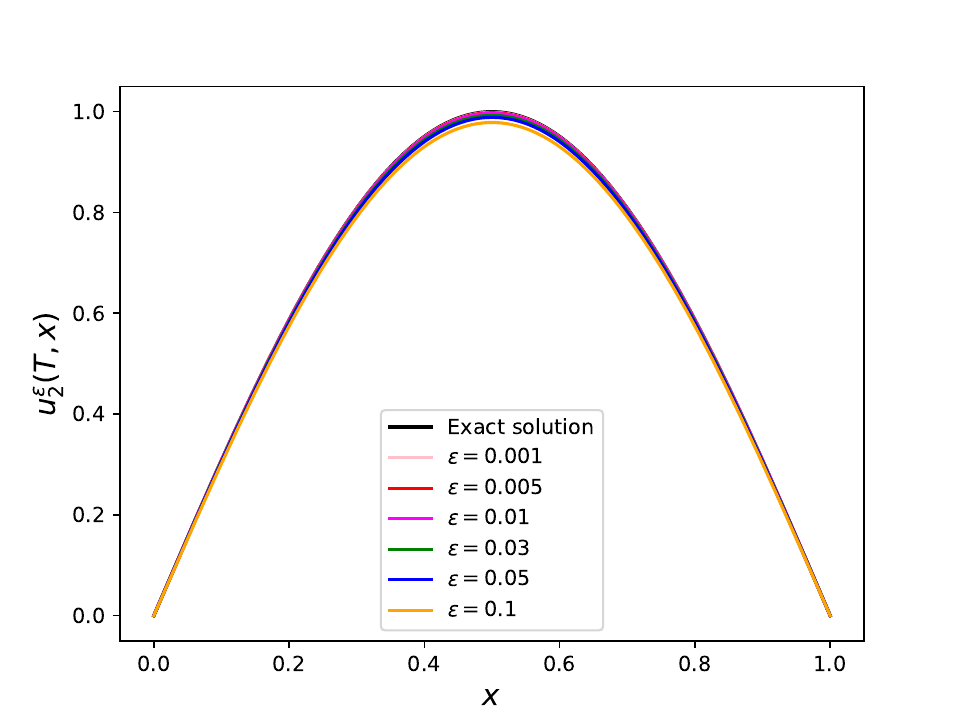}}
	\subfigure[]{\includegraphics[width=0.75\textwidth,angle=0,height = 0.22\textheight]{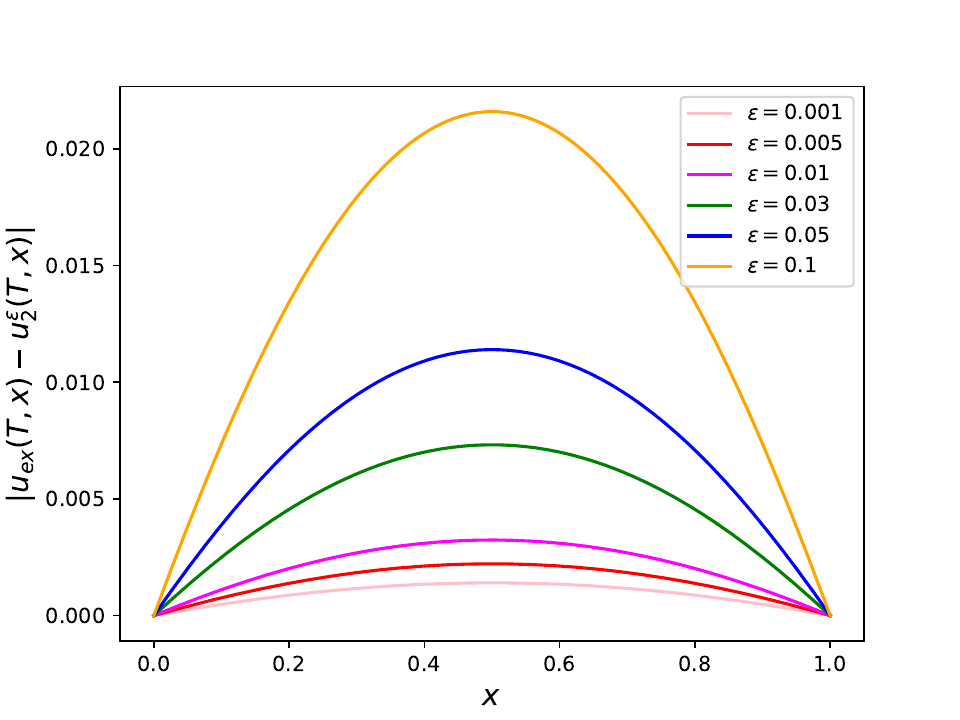}}
\end{center}
\vspace{-0.5cm}
\caption{Experiment 2 (1D): (a)~The exact solution at final time $T=1$ and its numerical approximation, and (b)~its corresponding absolute error, obtained for various levels of noise.}
\label{Experiment2u}
\end{figure}

\section{Conclusion}
\label{sec:conclusion}

In this paper, we investigated the inverse problem of recovering a time-dependent source term in a semilinear pseudo-parabolic equation with variable coefficients and Dirichlet boundary condition. The unknown source component $h(t)$ was determined from additional measurement data expressed as a weighted spatial average of the solution state.

The main theoretical contribution is the proof of existence and uniqueness of a weak solution to the inverse problem. This was achieved using Rothe's time-discretisation method, which required a non-standard treatment of the time-discrete problems due to the interplay between the third-order pseudo-parabolic term $\nabla \cdot (\eta \nabla \partial_t u)$ and the Dirichlet boundary condition. The analysis led to a source identification condition \eqref{cond:uniqueness} on the data, which guarantees the well-posedness of the inverse problem.

From a numerical perspective, we have implemented the inverse solver using a perturbation approach. At each time step, the solution of the discrete problem is obtained via the representation \eqref{eq:solution_representation}, which reduces the computation to solving two standard variational problems and evaluating a scalar coefficient. This approach is computationally efficient and robust for practical time discretisations. We have also established discrete-time error estimates in \Cref{thm:IP_disc_error}, showing that, under sufficient regularity assumptions on the data, the discrete solutions $(u_i,h_i)$ converge linearly in time to the exact solution $(u(t_i),h(t_i))$ as the timestep $\tau \to 0$. The numerical experiments confirm these theoretical rates and further demonstrate that the scheme yields accurate and stable reconstructions of both the solution and the source term, even for small noise levels.

For future work, it would be valuable to explore strategies for relaxing the source identification condition required in the theoretical analysis.

\bibliographystyle{unsrt} 
\bibliography{refs}

\end{document}